\newtheorem{defn}{Definition}
\newtheorem{lmm}{Lemma}
\newtheorem*{thm*}{Theorem}
\newtheorem{prop}{Proposition}
\newtheorem*{rmk*}{Remark}
\newtheorem*{coro*}{Corollary}
\newcommand{\R}{\ensuremath{\mathbb{R}}}
\newcommand{\Z}{\ensuremath{\mathbb{Z}}}
\newcommand{\N}{\ensuremath{\mathbb{N}}}
\newcommand{\Id}{\ensuremath{\mathrm{Id}}}
\newcommand{\la}{\ensuremath{\langle}}
\newcommand{\ra}{\ensuremath{\rangle}}
\newcommand{\mk}{\ensuremath{\mathfrak}}
\begin{document}
\title{2D Inverse Problem with a Foliation Condition}
%%%%%%%%%%%%%%%%%%%%%%%%

\author{Qiuye Jia}

%%%%%%%%%%%%%
%\vspace*{-5cm}
\maketitle

\begin{abstract}
We consider the geodesic X-ray transform in two dimension under the assumption that the boundary is convex and the region has a foliation structure. For functions that are constant on each layer of the foliation, we prove invertibility and a stability estimate of the geodesic X-ray transform. 
\end{abstract}
\tableofcontents

\section{Introduction}
\label{intro}
The geodesic X-ray transform is a generalization of the Radon transform, 
and the inverse problem on it 
can be formulated as follows: 
On a Riemannian manifold $(X,g)$, the information we have are integrals like $(I_\varrho f)(\gamma(\cdot)) := \int_\gamma \varrho  f(\gamma(t))dt$, where 
$\gamma$ is a geodesic segment in a neighborhood $O_p$ of a fixed point $p \in \partial X$, 
and $\varrho$ is a density function on $T^*X$, the cotangent bundle of $X$.

In this paper, we consider the local geodesic ray transform with weights in dimension 2.
`Local' means that 
the geodesic segment we integrate over lies in $O_p$ and has endpoints on $\partial X$, 
see Section \ref{sec_defn_transform} for the more detailed definition.
We show that we can recover the restriction of the function $f$ to $O_p$,
which amounts to the injectivity of $I_\varrho$,
by proving an estimate which uses the Sobolev norm of $I_\varrho f$ (viewed as a function on the 
projective sphere bundle $PSX$) to control the Sobolev norm of $f$. 
In addition,
this estimate is stable under small perturbations of the metric $g$.

This problem is resolved by Uhlmann and Vasy  \cite{uhlmann2016inverse} in dimension $\geq 3$ under convexity assumption.
Using the same framework, Paternain et al. \cite{paternain2019geodesic} extended this result to the case with matrix weights. 
The injectivity of the global Radon transform with 
positive real analytic weights is shown by Boman and Quinto \cite{boman1993support},
which implies the local injectivity since their result does not require smoothness of 
the transformed function and we can extend by 0 and apply the global result.

For the two dimensional case, Pestov and Uhlmann \cite{pestov2005two} showed boundary rigidity in the two dimensional compact simple case.
In fact, they showed that the scattering relation determines the Dirichlet-to-Neumann map. By this and the result in \cite{lassas2001determining}, we know two metrics in the boundary rigidity problem can only differ by a conformal factor. Combining this with the result of Mukhometov \cite{mukhometov1981problem}, 
which generalized the result in \cite{mukhometov19772Dreconstruction}, we know that this conformal factor is 1, proving the boundary rigidity result.

Boman proved \cite{boman1993example}\cite{boman2011local} that without the assumption that the weights are analytic,
the Radon transform is not locally injective in the 2-dimensional case 
by constructing non-zero functions and weights with vanishing Radon transform. 
On the other hand, the injectivity is expected to hold under certain restrictions on the function and the geometric structure. Here the additional condition we impose is the convexity of $\partial X$ 
as in \cite{uhlmann2016inverse} and 
the function is adapted to it in the sense we define below. Roughly speaking, this means that the direction that the function changes is conormal to the layer structure of the manifold. In the real world application, this can be interpreted as the situation where the data is sensitive to depth but not the position along layers.
For more results on injectivity of geodesic ray transforms and
stability estimates, see \cite[Chapter~4]{paternain2022geometric}.

We give an intuitive explanation on the difference between the 2-dimensional case and the higher dimensional case.
As shown below in the discussion before (\ref{symbol_main_term}), the leading contribution 
to the principal symbol is given by directions othogonal to 
the covector we are evaluating the principal symbol.
However, we can only choose the integrand $\chi \varrho$
 to be positive on certain fixed directions, or more concretely, on 
directions almost tangential to our surfaces of the foliation.
In the two dimensional case, we only have one such direction
(two if you consider opposite directions as different),
however we can not guarantee that this part carrying the 
positivity of the integrant is the direction othogonal 
to the covector variable of the symbol, which should give the 
leading term of the symbol.
From a microlocal perspective, this corresponds to the fact that the
ellipticity on the entire cotangent bundle fails. However, the ellipticity still holds if we restrict the directions on the fibre part of the cotangent bundle. So we identify the directions on which our operator behaves well, and modify the symbol on other directions to obtain complete ellipticity.

We need to emphasize that, this failure of ellipticity only happens at the fiber infinity on the `normal' direction of the scattering cotangent bundle. While on other directions, and when
fiber variables are bounded, this principal symbol is still elliptic.
See (\ref{symbol_boundary}) and the discussion after it.

In the next section, we introduce notations, the definition of the geodesic ray transform and state the main result. In Section \ref{calculus} we recall some basic facts about Sobolev spaces and scattering calculus. We describe and prove important properties of the conjugated normal operator of the geodesic ray transform in Section \ref{osc}. Finally in Section \ref{main_proof} we prove the main theorem.

\section{Notations and results}
\label{notations}
\subsection{General notations}
Let $(X,g)$ be a two dimensional Riemannian manifold with boundary. 
It is convenient to consider a larger region containing $X$. So suppose $X$ is embedded as a strictly convex domain in a Riemannian manifold $(\tilde{X},g)$ (we have used the same notation to indicate the smooth extension of the metric). Here convexity means when a geodesic is tangent to $\partial X$, it is tangent and curving away from $X$. 

Concretely, let $\bar{X}$ be the closure of $X$ in $\tilde{X}$ and let
$\rho$ be the boundary defining function of $\tilde{X}$, which means $\rho(z)$ vanishes on $\partial X$, $\rho(z)>0$ on $X$, and satisfy the non-degeneracy condition $d\rho \neq 0$ when $\rho=0$.
Using $G$ to denote the dual metric function on $T^*\tilde{X}$,
the convexity means that if at some $\beta \in T_p^*\tilde{X}\backslash o$ with $p \in \partial X$ and $o$ being the zero section, we have: 
\begin{equation} \label{convexity}
\text{if }(H_G \rho) (\beta)= 0 \text{, then }(H^2_G \rho) (\beta) < 0.
\end{equation}

We will consider local geodesic transform near $p$ in a neighborhood $O_p \subset U$ of $p$ in $X$. See Section \ref{sec_defn_transform} for more details on $O_p$ and the meaning of local here. 
Recall that an initial point and an tangent vector at this point determine a geodesic.
The bundle we use to parametrize geodesics is the \textit{projective sphere bundle}, denoted by $PSX$, whose fibers are $\R \times \mathbb{S}^{n-2}$, which is $\R \times \{\pm 1\}$ in our case. Each fiber has two components. 
It parametrizes geodesics whose initial velocities has unit tangential component, except for those ones that are normal to our foliation (corresponding to $\lambda=\pm \infty)$. Those excluded geodesics are irrelevant for our purpose since our cut-off $\chi$
is restricting our analysis to those geodesics that are almost tangential to our foliation.

\subsection{The foliation condition and the choice of the coordinate system}
We first introduce another boundary defining function $\tilde{x}$ satisfying 
\begin{equation} \label{defn_tildex}
d\tilde{x}(p)=-d\rho(p), \, \tilde{x}(p)=0,
\end{equation} 
whose level sets are strictly convex from the sublevel sets $\{\tilde{x}<-T\}$ for a constant $T>0$,
 which means geodesics tangential to this region will curve away from it. 
This function is used to introduce the artificial boundary, 
which is a level set of $\tilde{x}: \, \{\tilde{x}=-c\}$ for $c>0$.
This level set intersects with $\partial X$ and 
together with $\partial X$ it 
encloses a small region on which our discussion happens.
This allows us to conduct analysis locally. 
In terms of this new parameter $c>0$, the region $O_p$ is 
\begin{equation} \label{defn_Omegac}
\Omega_c:=\{ z \in X: \tilde{x}(z) \geq -c, \rho(z) \geq 0 \}.
\end{equation} 
We can choose $\tilde{x}$ such that $\bar{\Omega}_c$ is compact for $c$ sufficiently small.  Our proof for the local result is valid for all small $c$.

We give an explicit construction of $\tilde{x}$ here to show it exists locally
(thus can be used for our Theorem), 
but our result is valid for any $\tilde{x}$ satisfying conditions above.
Shrinking $O_p$ if necessary, we can assume the neighborhood we are working on is entirely in a local coordinate patch. We take
\begin{equation}
\tilde{x}(z) = -\rho(z) - \epsilon |z-p|^2, \quad  z \in O_p,  \label{tildex}
\end{equation}
where $|\cdot|$ means the Euclidean norm in this coordinate patch, and this term is introduced to enforce the region characterized by $\tilde{x}$ to be compact. Here $\epsilon>0$ is a fixed constant chosen before we choose $c$. For example, we can take $\epsilon = 1$.
Taking $c>0$ sufficiently small, $\{\tilde{x}>-c\}$ is compact. This is because $\tilde{x}>-c,\rho \geq 0$ implies $\rho \leq C$ and $|z-p|\leq c/\epsilon$. Since we are in a fixed coordinate patch, topologically this region is a closed subset of a compact Euclidean ball, hence it is compact. 
In addition, by the discussion in \cite[Section~3.1]{uhlmann2016inverse},
each $\Sigma_t$ with $0 \leq t \leq c$ is convex in the sense that any geodesic tangent to it curves away from $\{ \tilde{x} \leq -t \}$.

If we define $\Omega_c$ to be $\{0 \leq \rho(z) \leq c\}$, the region might be non-compact (even when $c$ is small, it might be a long thin strip near the boundary). So we use a modification of $-\rho$ making the level sets less convex to enforce its intersection with $\partial X$ happen in a compact region. 
Furthermore, the class of `adapted' function defined below is determined by the foliation given by $\tilde{x}$, which makes $\tilde{x}$ even more important in two dimensional case compared with higher dimensional cases.

We now turn to the \textit{convex foliation condition} we need in the two dimensional case. 
From now on, we assume $\tilde{x}$ to be any function that satisfies (\ref{defn_tildex}) 
and convexity condition after it.
Our foliation of the part of $X$ near $\partial X$ is given by level sets of $\tilde{x}$. 
That is, the family of hypersurfaces $\{\tilde{\Sigma}_t = \tilde{x}^{-1}(-t),0 \leq t \leq T\}$.
Here we choose $T$ to be a number such that desired properties of $\tilde{x}$ hold from $\tilde{\Sigma}_0$
up to $\tilde{\Sigma}_T$. 
For the Theorem, which concerns the local injectivity,
we only use the part of the foliation with $t \leq c$. While for the Corollary,
which concerns the global result, we use the entire foliation up to $\tilde{x}=-T$.
By our choice of $c$, we may take $T=c$. Taking $T$ larger will make the region 
on which our result holds larger. In fact, one may apply a layer stripping method 
to obtain injectivity result up to $\tilde{\Sigma}_T$.
When $T>c$, we may take $\tilde{\Sigma}_c$ as the `new boundary' and apply our Theorem,
 and then repeat. For more details of the layer stripping method, see
  the discussion after \cite[Corollary]{uhlmann2016inverse}.

Next we define the adapted function class associated to $\tilde{x}$.
\begin{defn}
With notations above, $\mathcal{F}_{\tilde{x}}(X)$ is defined to be the function space consists of functions which are constant on each $\tilde{\Sigma}_t$, and we say such a function is \textit{`adapted to the foliation $\tilde{x}$'}. In addition, $\mathcal{F}_{\tilde{x}} ^s(O_p):=\mathcal{F}_{\tilde{x}}(X) \cap H^s(O_p)$, where $H^s(O_p)$ denotes Sobolov space of order $s$, defined by identifying $O_p$ with $\bar{\R}^n$. 
\end{defn}
This function class depends on the choice of $\tilde{x}$. Our result is valid for any fixed $\tilde{x}$, and corresponding class of functions adapted to the foliation given by it.

Finally, the coordinate system we use is 
\begin{equation} \label{coordinate}
(x,y,\xi,\eta), 
\end{equation}
where $x=\tilde{x}+c$, and $y$ is the coordinate on $\tilde{\Sigma}_t$, which are 
line segments in our context. $\xi,\eta$ are fiber variables dual to $x,y$ respectively
in the scattering cotangent bundle $^{\;sc}T^*X$
(i.e., we are writing covectors as $\xi \frac{dx}{x^2}+\eta \frac{dy}{x}$).

We emphasize that introducing $\tilde{X}$ and the artificial boundary $\{\tilde{x}=-c\}$ brings us convenience in this framework, allowing us to 
restrict our analysis in to this local region and use the scattering pseudodifferential calculus.

%To facilitate the dicussion 

\subsection{The geodesic ray transform} \label{sec_defn_transform}
Geodesics below are with respect to the metric $g$. 
Recalling (\ref{defn_Omegac}), we replace $\tilde{x}$ by $x=\tilde{x}+c$, so that $x$ itself becomes the defining function of the artificial boundary. In an open set $O \subset \bar{X}$, for a geodesic segment $\gamma \subset O$, we call it \textit{O-local geodesic} if its endpoints are on $\partial X$, and \textbf{all geodesic segments we consider below are assumed to be $O_p-$local.}
Next we introduce strictly positive density functions, which is needed in the discussion of geodesic ray transform. We use ${TX}$ to denote the tangent bundle of $X$, and notice that each point on it (i.e., a point with a tangent vector living at that point) determines a geodesic. Before defining the function class, we define $G_X := \{ (s,{z}) | s\in {TX}, {z} \in X \text{ lies on the geodesic determined by } s \}$. $G_X$ is a submanifold of ${TX} \times X$.

\begin{defn}
$\varrho \in \mathcal{C}^{\infty}(G_X)$ is called a $O_p-$strictly positive density function if:
\begin{enumerate}
\item For $s_1,s_2 \in {TX}$, if they determine the same $O_p-$local geodesic $\gamma$, then $\varrho(s_1,{z})=\varrho(s_2,{z})$ for ${z} \in \gamma$.
\item $C_1 \geq \varrho \geq C_0 >0$ on $({TX}|_{O_p} \times O_p )\cap G_X$ for some constants $C_0,C_1$.
\end{enumerate} \label{defn_weight1}
\end{defn}

The upper bound condition, which is assumed to ensure integrability, can be weakened.   
As we mentioned in the introduction, \textit{the local geodesic ray transform weighted by $\varrho$} of a function $f$  is defined by:
$$ I_\varrho  f(s) := \int_\gamma \varrho(s,\gamma(t)) f(\gamma(t))dt,$$
where $\varrho$ is an $O_p-$strictly positive density function, $s \in {PSX}$, $\gamma(\cdot)$ is the geodesic determined by $s$.
So our geodesic ray transform is a function on ${PSX}$. Condition 1. above implies that for $s_1,s_2$ projecting to the same pont on $X$ and has parallel nonzero fibre part, they determine the same function $\varrho(s_1,\cdot)=\varrho(s_2,\cdot)$ of $z$.

Our local injectivity result for the geodesic ray transform implies 
the local injectivity for the unweighted geodesic ray transform over the same class of functions
as here, i.e., those ones that are adapted to a foliation. 
To reduce to the unweighted case, we take $\varrho \equiv 1$. 
The purpose of adding this notion of density function is to make our theorem more general, and $\varrho$ here should be considered as `known' and our injectivity claim is for $f$ only. 
For density functions without condition 1 of Definition \ref{defn_weight1}, i.e., for density functions may also depend on the choice of the starting point of the geodesic, we have more information since our given information in the injectivity problem is the vanishing of these geodesic ray transforms. In that case, we have many integrals for a single geodesic. This means our formulation is the case where we need the `least information'.  
On the other hand, the injectivity of the geodesic ray transform for general class
of functions remains open, see \cite{paternain2022geometric} for more 
recent progress.

\subsection{The main result}
We use exponentially weighted Sobolev spaces: $H^s_F(O_p) : = e^{\Phi_F(x)}H^s(O_p) =\{ f \in H_{loc}^s(O_p): e^{-\Phi_F(x)}f \in H^s(O_p)\}$, where the additional subscript $F$ is a positive constant, which indicates the exponential conjugation, $\Phi_F(x)=\frac{F}{x}$ when $x$ is close to 0, and $\Phi_F(x)=\frac{F}{c-x}$ when $x$ is close to c.

For exponentially weighted Sobolev spaces on other manifolds, we use the same notation with $O_p$ replaced by that manifold. 
Furthermore, $PSX|_{O_p}$ is the restriction of the projective sphere bundle to $O_p$.
 With all these preparations, the main theorem is:
\begin{thm*}
For $p \in \partial X$, with density $,\tilde{x}, \varrho$ as above, we can choose $O_p = \{\tilde{x} > -c \}\cap \bar{X}$, so that the local geodesic transform is injective on $\mathcal{F}_{\tilde{x}} ^s(O_p) := \mathcal{F}_{\tilde{x}}(X) \cap H^s(O_p)$, $s\geq 0$. More precisely, there exists $C>0$ such that for all $f \in \mathcal{F}_{\tilde{x}} ^s(O_p)$, 
\begin{align} \label{est_main}
||Rf||_{ H^{s}_F ([0,c]_{x})} \leq C || I_\varrho f ||_{H^{s+\frac{3}{2}}(PSX|_{O_p})},
\end{align}
where $[0,c]_x$ is the part of a line transversal to $\tilde{x}-$level sets 
with $x: = \tilde{x}+c \in [0,c]$.
\end{thm*}
In the corollary below, $X,\tilde{\Sigma}_t$ are defined as above, and in addition we assume that $\bar{X}$ is compact.
\begin{coro*}
If the convex foliation construction $\{\tilde{\Sigma}_t\}$ is valid up to $\{\tilde{x}=-T\}$ and
$K_T : = X \backslash \cup_{t \in [0,T)} \tilde{\Sigma}_t$ has measure zero, the global geodesic X-ray transform is injective on $\mathcal{F}^0_{\tilde{x}}(X):=\mathcal{F}_{\tilde{x}}(X)\cap L^2(X)$. If $K_T$ has empty interior, the global geodesic transform is injective on $\mathcal{F}_{\tilde{x}} ^s(X):=\mathcal{F}_{\tilde{x}}(X)\cap H^s(X)$ for $s > \frac{n}{2}$.
\end{coro*}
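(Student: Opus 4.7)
The plan is a layer-stripping argument, iterating the main theorem from \(\partial X=\Sigma_0\) inward along the foliation \(\{\Sigma_t\}_{t\in[0,T]}\). Suppose \(f\in\mathcal{F}_{\tilde{x}}(X)\) lies in \(L^2(X)\) (resp.\ \(H^s(X)\), \(s>n/2\)) and \(I_\varrho f\equiv 0\) on all geodesics in \(X\) with endpoints on \(\partial X\). Because \(f\) is constant on each layer \(\Sigma_t\), it suffices to show
\[
A := \bigl\{t_0\in[0,T]:f\equiv 0\text{ on }\bigcup_{t\in[0,t_0)}\Sigma_t\bigr\}
\]
equals \([0,T]\). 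The set \(A\) is automatically closed under suprema of increasing sequences, so I only have to supply a positive base point and an open-iteration step.

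For the base case I pick any \(p\in\partial X\) and apply the main theorem: its conclusion with the hypothesis \(I_\varrho f=0\) on \(O_p\)-local geodesics gives \(f|_{O_p}=0\), and since \(O_p\cap\Sigma_t\) is a nonempty open subset of \(\Sigma_t\) for \(t\in[0,c)\), the adapted-to-foliation property propagates this vanishing to all of \(\bigcup_{t\in[0,c)}\Sigma_t\). For the open-iteration step I take \(t_0\in A\) with \(t_0<T\). By the foliation hypothesis, \(\Sigma_{t_0}\) is strictly convex, so \(X^{(t_0)}:=\{\tilde{x}\leq -t_0\}\cap X\) is a strictly convex subdomain with defining function \(\tilde{x}+t_0\) and its own foliation \(\{\Sigma_{t_0+s}\}_{s\in[0,T-t_0]}\); the restriction \(f|_{X^{(t_0)}}\) is adapted to this shifted foliation. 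Since \(f\) vanishes outside \(X^{(t_0)}\) by the inductive hypothesis, the exterior portions of any geodesic of \(X\) contribute nothing to \(I_\varrho f\), so \(I_\varrho\) of \(f|_{X^{(t_0)}}\) vanishes on \(X^{(t_0)}\)-local geodesics as well. Invoking the main theorem inside \(X^{(t_0)}\) at any \(p\in\Sigma_{t_0}\) yields \(\delta>0\) with \(f\equiv 0\) on a neighborhood of \(p\) in \(\{\tilde{x}\leq -t_0\}\), which the adapted property spreads across entire layers to give \(t_0+\delta\in A\). Hence \(A=[0,T]\) and \(f\equiv 0\) on \(X\setminus K_T\).

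The dichotomy in the conclusion is forced by the assumption on \(K_T\). In the \(L^2\) case, \(K_T\) having measure zero gives \(f=0\) in \(L^2(X)\) at once. In the \(H^s\) case with \(s>n/2\), Sobolev embedding makes \(f\) continuous, and vanishing on the dense set \(X\setminus K_T\) (since \(K_T\) has empty interior) forces \(f\equiv 0\) pointwise.

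The principal technical obstacle I anticipate is the open-iteration step: I must verify that the shifted configuration \((\tilde{x}+t_0,\,X^{(t_0)},\,f|_{X^{(t_0)}},\,\varrho)\) genuinely meets every hypothesis of the main theorem when the ``boundary'' is an interior layer \(\Sigma_{t_0}\) rather than \(\partial X\). In particular, strict convexity has to be read from the correct side, the localization and compactness role played by the \(-\epsilon|z-p|^2\) correction must be transplanted to interior points, and the global adapted class must be reconciled with the locally adapted class built into the main theorem's statement. Everything else is bookkeeping.
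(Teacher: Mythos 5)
Your proof is correct and rests on the same mechanism the paper uses: apply the local theorem at an interior layer $\Sigma_{t_0}$, regarded as the convex boundary of the subdomain $\{\tilde{x}\leq -t_0\}\cap X$, and use the adapted-to-foliation property to spread the resulting local vanishing across whole layers. The paper compresses your open-closed iteration into a single extremal argument: it sets $\tau:=\inf_{\operatorname{supp}f}(-\tilde{x})$, notes $f\equiv 0$ on $\Sigma_t$ for $t<\tau$ by definition of the infimum, uses closedness of $\operatorname{supp}f$ and compactness of $\bar X$ to produce $q\in\Sigma_\tau\cap\operatorname{supp}f$, and then applies the theorem inside $\{\tilde{x}<-\tau\}$ near $q$ to contradict $q\in\operatorname{supp}f$. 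Your "openness" step is exactly this application at $\tau=t_0$, and your "closed under increasing suprema" observation is what justifies passing to the infimum in the paper's formulation; the two presentations are equivalent, with yours being somewhat more explicit about the bookkeeping you flag at the end (reading convexity from the correct side, compactness of the shifted $O_p$, and the shifted adapted class), all of which transfer without difficulty since the foliation hypothesis supplies convexity of every $\Sigma_t$ by assumption.
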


\begin{rmk*}
We added `global' because the function are not restricted to $O_p$ anymore. The geodesic X-ray transform appears in this paper is weighted. 
In addition, our result is stable since all the conditions we need in the proof
are also satisfied by small perturbations of $g$, and the constant $C$ 
in (\ref{est_main}) can be made uniform for small perturbations of $g$,
hence the same result holds for small metric perturbations.
\end{rmk*}

\begin{proof} Assuming the theorem holds, we prove the corollary. 
For nonzero $f \in L^2(X)$ and $K_T$ has measure zero case, $\text{supp}f$ has non-zero measure by the definition of $L^2(X)$.
Consider $\tau:=\text{inf}_{\text{supp}f} (-\tilde{x})$. If $\tau \geq T$, then $\text{supp} f \subset K_T$, which has measure zero, contradiction. So $\tau < T$ and by definition $f \equiv 0$ on $\tilde{\Sigma}_t$ with $t < \tau$. By the definition of $\tau$, closedness of $\text{supp}f$ and compactness of $\bar{X}$, we know there exists $q \in \tilde{\Sigma}_\tau \cap \text{supp}f$. However, consider the manifold given by $\{ \tilde{x} < -\tau \}$, to which we can apply our theorem. Since we have local injectivity near $q$, we conclude that $q$ has a neighborhood disjoint with $\text{supp}f$, contradiction.

If $f \in H^s(X),s>\frac{n}{2}$, $f \neq 0$, then $f$ is continuous by the Sobolev embedding theorem and consequently supp$f$ has non-empty interior since .Then apply local result to a fixed point in $\text{supp}f$ gives the contradiction.
\end{proof}

\section{Sobolev spaces and the scattering calculus}
\label{calculus}
In this section, we recall some basic facts of pesudodifferential operators, their symbols, and the process of quantization, and also some basic facts about Sobolev spaces.
\subsection{Sobolev spaces}
 We state some inclusion relationship between weighted Sobolev spaces. Suppose $\bar{M}$ is a compact manifold with boundary whose interior is $M$. Let $\mathcal{V}_b(\bar{M})$ be the collection of all smooth vector fields tangent to $\partial M$. Suppose $x$ is a global boundary defining function, we set $\mathcal{V}_{sc}(\bar{M}) = x\mathcal{V}_b(\bar{M})$.

Then the $L^2$-integrability with respect to the scattering density $x^{-(n+1)}dxdy$ gives $L^2_{sc}(\bar{M})$. Here the density comes from the identification through $x = {r_1}^{-1}$, and the ordinary volume form in the polar coordinate is ${r_1}^{n-1}d{r_1}dy$, where $y$ denotes the spherecal variables and ${r_1}$ denotes the radial variable. The corresponding polynomially weighted Sobolev space $H_{sc}^{s,r}(\bar{M})$ consists of functions $u$ such that $x^{-r}V_1V_2...V_ku \in L^2_{sc}(\bar{M})$ for $k \leq s$ (when $k=0$, it's $u$ itself), and $V_j \in \mathcal{V}_{sc}(\bar{M})$.

With these definitions, we know
\begin{align*}
&H^s(\bar{M}) \subset H^{s,r}_{sc}(\bar{M}), \, r \leq - \frac{n+1}{2} \\
&H^{s,r'}_{sc}(\bar{M}) \subset H^s(\bar{M}), \, r' \geq - \frac{n+1}{2} + 2s.
\end{align*}
See Section 2.3 of \cite{uhlmann2016inverse} for more details.

\subsection{Scattering calculus on the Euclidean space}
$a(z,\zeta) \in \mathcal{C}^{\infty}(\R^n_z \times \R^n_\zeta)$ is said to be a scattering symbol of order $(m,l)$ if and only if:
$$ |D_z^\alpha D_\zeta^\beta a(z,\zeta)| \leq C_{\alpha \beta} \langle z \rangle^{l-|\alpha|} \langle \zeta \rangle^{m-|\beta|},$$
where $\langle z \rangle = (1+|z|^2)^{\frac{1}{2}}$, with $|z|$ being the Euclidean norm, and similarly for $\langle \zeta \rangle $. The space consists of such symbols is denoted by $S^{m,l}(\R^n,\R^n)$, or $S^{m,l}$ for short. Then the space of pesudodifferential operators $\Psi^{m,l}_{sc}(\R^n)$ is defined as `the left quantization' of such symbols.

In order to facilitate the discussion of the quantization map and the generalization to general manifolds with boundary, we compactify the $\R^n$ in both base and phase factors. Concretely, we compactify $\R^n$ to a closed ball $\bar{\R}^n$ by adding the `sphere at infinity' $\mathbb{S}^{n-1}$. Using ${r_1}$ to denote the radial variable, we first identify $\R^n\backslash\{0\}$ with $(0,+\infty)_{r_1} \times \mathbb{S}_\theta^{n-1}$ through polar coordinates $({r_1},\theta) \rightarrow {r_1}\theta$. Let $x={r_1}^{-1}$, then $\R^n\backslash\{0\}$ becomes $(0,+\infty)_x \times \mathbb{S}_\theta^{n-1}$. Now glueing a sphere to $x=0$, or extending the range of $x$ to $[0,\infty)$ is equivalent to attaching a sphere at infinity in the original coordinates. So formally $\R^n$ is obtained by taking disjoint union of $\R^n$ and $[0,+\infty)_x \times \mathbb{S}^{n-1}$ modulo the identification given above. Now $x={r_1}^{-1}$ is a boundary defining function near $\partial \bar{\R}^n$. By modifying it in the `large $x$ small ${r_1}$' part, this gives us a global boundary defining function $\rho$. Decay properties can be rephrased as regularity on this compatified space: Schwartz functions on $\R^n$ are exactly restrictions to $\R^n$ of $\mathcal{C}^{\infty}$ functions on $\bar{\R}^n$.

Now we return to give an explicit formula of the left quantization. Let $y$ be a coordinate system on $\mathbb{S}^{n-1}$
in the decomposition $[0,+\infty)_x \times \mathbb{S}^{n-1}$. 
For $a \in S^{m,l}(\R^n,\R^n)$, in terms of the coordinates after the radial compactification above, 
the left quantization of $a$ is the operator defined by
\begin{equation} \label{defn_quantization_1}
\tilde{q}_L(a)u(x,y) = (2\pi)^{-n} \int e^{i(\xi\frac{x-x'}{xx'}+\eta (\frac{y}{x}-\frac{y'}{x'}))}u(x',y')a(x,y,\xi,\eta)\frac{dx'dy'}{(x')^{n+1}} d\xi d\eta.
\end{equation}
This is the left quantization of scattering pseudodifferential operators
given by Melrose \cite{melrose1994spectral}, but written in a different manner,
while giving the same operator algebra.
Recall that the scattering double space is obtained from blowing up
the double space $\bar{\R}^n \times \bar{\R}^n$ twice, first along the 
corner $(\partial \bar{\R}^n)^2$, and then along the boundary of
the lifted diagonal after this first blow up. 
%In terms of the Schwartz kernel,
The reason we choose this quantization formula is that 
the definition (\ref{defn_quantization_1}) we are using
is more convenient for our purpose since it is valid not only near the lifted diagonal
of the scattering double space, but also away from it, `globally' in a coordinate chart.
From a geometric point of view, this corresponds to choosing a different 
parametrization near the lifted diagonal. Concretely, we use
\begin{align} \label{coordinate_sc_diagonal}
x,y,\tilde{X} = \frac{x'-x}{xx'}, \tilde{Y} = \frac{y}{x}-\frac{y'}{x'}
\end{align}
as a coordiante system, which is valid up to the region where the coordinate systems $(x,y),(x',y')$ are valid. On the other hand, the more commonly seen construction of the scattering calculus is using
\begin{align} \label{coordiante_sc_diagonal_2}
x,y,X = \frac{x-x'}{x^2},Y= \frac{y-y'}{x},
\end{align}
to parametrize the region near the lifted diagonal in the scattering double space. The corresponding quantization map is given by
\begin{equation} \label{defn_quantization_2}
q_L(a)u(x,y) = (2\pi)^{-n} \int e^{i(\xi\frac{x-x'}{x^2}+\eta \frac{y-y'}{x})}u(x',y')a(x,y,\xi,\eta)\frac{dx'dy'}{(x')^{n+1}} d\xi d\eta.
\end{equation}
See \cite[Section~2.1]{zachos2022inverting} for a detailed discussion on this.

Both the space of symbols and that of pseudodifferential operators increase with respect to $m,l$. This family of spaces $\Psi^{*,*}_{sc}(\R^n)$ forms a filtered $*-$algebra under the composition and taking adjoints relative to the Euclidean metric, i.e.,
\begin{align*}
A \in \Psi^{m,l}_{sc}(\R^n), B \in \Psi^{m,l}_{sc}(\R^n) \implies AB \in \Psi^{m+m',l+l'}_{sc}(\R^n),
\end{align*}
and
\begin{align*}
A \in \Psi^{m,l}_{sc}(\R^n) \implies A^* \in \Psi^{m,l}_{sc}(\R^n).
\end{align*}

The next important notion is the principal symbol. For $A \in \Psi^{m,l}_{sc}$, its principal symbol is the equivalence class of $a$ in $S^{m,l}/S^{m-1,l-1}$ where $a$ is the symbol whose left quantization is $A$. This equivalence class captures the behaviour and properties of $A$ modulo lower order operators. We say $A \in \Psi^{m,l}_{sc}(\R^n)$ is elliptic if its principal symbol is invertible in the sense that there exists $b \in S^{-m,-l}$ such that $ab-1 \in S^{-1,-1}$. Whether $b$ exists or not does not depend on the choice of representative of $a$ in that class. When $A$ is elliptic, the standard parametrix construction gives us $B \in \Psi^{-m,-l}_{sc}(\R^n)$ such that $AB-\Id \in \Psi^{-\infty,-\infty}_{sc}(\R^n)$, which means the error term operator has a Schwartz function on $\R^{2n}$ as its Schwartz kernel. Hence $R:=AB-\Id$ mapping $H^{s,r}$ to $H^{s-m,r-l}$ is compact for any $s,r,m,l$, with $H^{s,r}$ defined in the Sobolev space section. Compactness and parametrix construction together gives Fredholm property of elliptic operator $A$. That is, it has closed range, finite dimensional kernel and cokernel, with following estimate for any $N$:
$$ ||u||_{H^{s,r}(\R^n)} \leq C (||Au||_{H^{s-m,r-l}(\R^n)} + ||\tilde{F}u||_{H^{-N,-N}(\R^n)}),$$
where $\tilde{F}$ can be taken as a finite rank operator in $\Psi^{-\infty,-\infty}_{sc}(\R^n)$.

As we have mentioned, we can campactify both factors of $\R_z^n \times \R_\zeta^n$ to define the scattering symbols of $\bar{\R}_z^n \times \bar{\R}_\zeta^n$. We denote the defining function of $\bar{\R}_z^n$ (`the position factor') by $\rho_\partial$, and that of $\bar{\R}_\zeta^n$ (`the momentum factor') by $\rho_\infty$. We also define $\Psi^{m,l}_{sc}(\bar{\R}^n):=\Psi^{m,l}_{sc}(\R^n)$. The ellipticity of $A \in \Psi^{m,l}_{sc}(\R^n)$ is equivalent to the non-vanishing property of $\rho_\partial^l\rho_\infty^ma$, where $a$ is a left symbol (whose left quantization is that operator) of $A$. In particular, by the compactness of the boundary sphere, there exists $C>0$ such that $|a| \geq  C \rho_\partial^{-l}\rho_\infty^{-m}$, which is convenient to use in practice.

\subsection{Generalization to manifolds}
Let $\bar{M}$ be a manifold with boudary and denote its interior by $M$. Then the scattering pseudodifferential operators $\Psi^{m,l}_{sc}(\bar{M})$ is obtained by locally identifying the manifold with $\bar{\R}^n$. On such charts $U \times U$, the Schwartz kernel of the operator has the same property as the case of $\bar{\R}^n$, and we also allow additional globally Schwartz terms in the Schwartz kernel. All those algebraic properties of $\Psi^{m,l}_{sc}(\bar{\R}^n)$ generalize to the manifold case. In addition, the weighted Sobolev spaces $H^{s,r}_{sc}(\bar{M})$ are also obtained by locally identifying $\bar{M}$ with $\bar{\R}^n$.
A clarification on how we define $L^2_{sc}(\bar{M})$ might be useful. After locally identify $\bar{M}$ with $\bar{\R}^n$, we use the scattering density ${r_1}^{n-1}d{r_1}dy=x^{-n-1}dxdy$ to define $L^2_{sc}$ on this coordinate patch, then we use a partition of unity to define $L^2_{sc}(\bar{M})$ space. One can check that membership of this spaces is independent of the choice of the partition of unity.

\section{The combined pseudodifferential algebra and Sobolev spaces}
\label{sec: combined psiDO}
In this section we introduce the combined pesudodifferential algebra
that we will use, and prove a lemma the we need in showing the symbolic property 
of our modified normal operator.
% We first give the definition of our co
Our combined pseudodifferential algebra consists of operators that 
are locally scattering pseudodifferential operators near
$0 \in [0,c]$ and $c \in [0,c]$. Concretely, we define:
\begin{defn}
The operator class $\Psi_{sc,sc}^{m,l_1,l_2}([0,c])$ consists of operators $A$ such that 
\begin{itemize}
	\item For $\phi,\psi \in  C_c^{\infty}(\R)$ with support away from $0$, $\phi A \psi \in \Psi_{sc}^{m,l_1}((0,c])$, and the scattering algebra is constructed using $c$ as the boundary.
	\item For $\phi,\psi \in  C_c^{\infty}(\R)$ with support away from $c$, $\phi A \psi \in \Psi_{\mathrm{sc}}^{m,l_2}([0,c))$, and the scattering algebra is constructed using $0$ as the boundary.
	\item For $\phi,\psi \in  C_c^{\infty}(\R)$ with disjoint support, $\phi A \psi$ has Schwartz kernel which is $C^\infty$ and rapidly vanishing when approaching both
	$0$ and $c$.
\end{itemize}
\label{defn_operator}
\end{defn}
%%%%%%%%%%%%%%%%%%%%%%%%%%%%%%%
One can check that
$\Psi_{{sc},{sc}}^{\infty,\infty,\infty}([0,c]) = \cup_{m,l_1,l_2 \in \Z} \Psi_{{sc},{sc}}^{m,l_1,l_2}([0,c])$ is a
tri-filtered $*-$algebra. We also need Sobolev spaces; as usual these
are defined by localization:
\begin{defn}
The function class $H_{{sc},{sc}}^{m,l_1,l_2}([0,c])$ consists of functions $f$ such that 
\begin{itemize}
	\item For $\phi \in  C_c^{\infty}(\R)$ with support
          away from $0$, $\phi f  \in H_{{sc}}^{m,l_1}((0,c])$, and the scattering Sobolev space is constructed using $c$ as the boundary.
	\item For $\phi \in  C_c^{\infty}(\R)$ with support
          away from $c$, $\phi f  \in H_{{sc}}^{m,l_2}([0,c))$, and the scattering Sobolev space is constructed using $0$ as the boundary.
\end{itemize}
\label{defn_function}
\end{defn}

We also define a new cotengent bundle that has corresponding boundary behavior near each boundary. For convenience, we denote the defining function on of the boundary
$x=c$ as 
\begin{align*}
\rho_c:=c-x.
\end{align*}
Since the difference of two coordinate patches of $X$ near $x=0$
and $x=c$ is just using $x$ or $c-x$ as the first variable,
we use the same notations for points on $TX$
on two patches near $x=0$ and $x=c$: we still write vectors as $-\lambda \partial_{\rho_c}+\omega \partial_y$; we also use $(x,y,\lambda,\omega)$
to indicate starting points of $\gamma_{x,y,\lambda,\omega}(t)$, and also the first variable of the weight $\varrho$.
%use $\lambda$ to denote the component on the direction $\partial_x=-\partial_{\rho_c}$, and use $\omega$ to denote the component of $\partial_y$ direction.
On the other hand, since we use scattering cotangent bundle near each end using $x,\rho_c$ as defining functions respectively, we use $\xi_{sc,0},\xi_{sc,c}$ as fiber variables of scattering cotagent bundles near $x=0,x=c$ respectively.

\begin{defn}
The sc-sc cotangent bundle $^{{sc},{sc}}T^*[0,c]$ is the vector bundle such that
\begin{itemize}
\item In a neighborhood of $c$, its local frame is given by the scattering cotangent bundle with $c$ being the boundary: $\frac{dx}{\rho_c^2}$.
\item In a neighborhood of $0$, its local frame is given by the frame of the scattering cotangent bundle with $0$ being the boundary:  $\frac{dx}{x^2}$.
\item Away from both $0,c$, its local frame is the
  same as the usual cotangent bundle: $dx$.
\end{itemize}
\label{defn_bundle}
\end{defn}
%%%%%%%%%%%%%%%%%%%%%%%%%%%%%%%%%%%%%%%

\section{Ellipticity of the normal operator} 
\label{osc}
In this section we first reformulate the geodesic X-ray transform on our function class using pesudodifferential operators on the transversal line we choose:
$$\{x \in [0,c], y=0\}.$$ 
Then we prove the ellipticity of the exponentially conjugated microlocalized normal operator. The exponential conjugation is needed because although the Schwartz kernel of $A=L \circ I_\varrho$ behaves well when $X=\frac{x'-x}{x^2},\, Y=\frac{y'-y}{x}$ are bounded, it is not so when $(X,Y) \rightarrow \infty$. This conjugation gives additional exponential decay to resolve this issue, and also regularizes
the Schwartz kernel of $A_{1,F}$, which is the `one dimensional version'
of $A$.

Using the notation $(z,\nu) = (x,y,\lambda,\omega) \in PSX$ and $\rho_c=c-x$,  
we define the modified adjoint operator of $I_\varrho$ as
$$
(Lv)(z) := x^{-2} \int \chi(x,\frac{\lambda}{x},\frac{\lambda}{\rho_c}) v( \gamma_{x,y,\lambda,\omega}) d\lambda d\omega,
$$
where $\gamma_{x,y,\lambda,\omega}(t)$ is the geodesic starting at $(x,y)$ with initial tangent vector $(\lambda,\omega),\omega=\pm1$.
We write it in component form as $\gamma_{x,y,\lambda,\omega}(t) = (\gamma^{(1)}_{x,y,\lambda,\omega}(t),\gamma^{(2)}_{x,y,\lambda,\omega}(t))$.
We will use $\gamma(t)$ to indicate $\gamma_{x,y,\lambda,\omega}(t)$ when there is no confusion about the choice of its starting point in the context.
We choose $\chi$ to be:
\begin{align*}
\chi=\chi_0(x)\tilde{\chi}(\frac{\lambda}{x})+ (1-\chi_0(x))\tilde{\chi}(\frac{\lambda}{\rho_c}),
\end{align*}
where $\chi_0$ is 1 when $x<c/3$ and supported in $(-\infty,\frac{c}{2}]$;
$\tilde{\chi}$ is a compactly supported even function, strictly positive near 0. 
In particular, when $x<c/3$, $\chi(x,\frac{\lambda}{x},\frac{\lambda}{\rho_c})$ can be written as a function of $x,\frac{\lambda}{x}$ with compact support; and when $x>\frac{c}{2}$, it can be written as a function of $x,\frac{\lambda}{\rho_c}$ with compact support.
%It depends only on the second variable (which will be denoted by $\hat{\lambda}_0$) when $x<\frac{c}{4}$, and only depends on the third variable (which will be denoted by $\hat{\lambda}_c$) when $x>\frac{3c}{4}$.
$v$ is a function defined on the space of $O_p-$local geodesic segments, whose prototype is the geodesic ray transform
$$
v(\gamma) = I_\varrho f(\gamma) = \int_\gamma f(\gamma(t)) \varrho((x,y,\lambda,\omega),\gamma_{x,y,\lambda,\omega}(t))dt,
$$
in which $f( \gamma(t) )$ can be replaced by higher order tensors, coupling with $\dot{\gamma}(t)$ in all of its slots in more general situations. 
By the compactness of $\bar{\Omega}_c$ discussed after (\ref{tildex}) and $|(\lambda,\pm1)|\geq 1$ and the convexity assumption on 
we made, \cite[Lemma~3.1]{vasy2020semiclassical} shows that there exits a uniform bound $T_g$ of the escape time of $\bar{O}_p$. Thus 
we assume $|t|\leq T_g$ in arguments below.
$I_{\varrho}$ is the original geodesic ray transform operator and $L$ is its adjoint if we ignore $\chi$ and assume fast decay conditions on integrands. So their composition is the model of the normal operator.  
\begin{comment}
comparison with the 1-form case in \cite{stefanov2017local}.
If we compare with definition in general tensor setting, we can see that this factor compensates the factor appears in $g_{sc}(\cdot ,\lambda \partial_x + \omega \partial_y)$, just recall that $g_{sc} = \frac{dx}{x^4}  + \frac{h}{x^2}$. And this gives us the right order and support conditions when we finally apply the stationary phase lemma to the normalized variables. We do not need to consider $L_1$ as in \cite{stefanov2017local} in the current 2-dimensional case. Because now $\hat{Y}=\frac{Y}{|Y|}=\pm 1$, hence when $\xi = 0$, the equation $\hat{Y} \cdot \eta = 0$ is not satisfied when $\zeta \in T_z^*M\backslash 0$. Now the $N_F$ only have $N_{0,F}$ part, and we just use $N_F$ to denote it. The difference between the current situation and the one form case is that we do not have factor like $(h(y)\omega)$, and $f$ does not involve $\dot{\gamma}_{x,y,\lambda,\omega}(t)$, but depends on $\gamma_{x,y,\lambda,\omega}(t)$ only. 
\end{comment}
We define the conjugated normal operator $A_F$ as
\begin{equation*}
A_F = e^{-\Phi_F(x)}L \circ I_\varrho e^{\Phi_F(x)},
\end{equation*}
where $\Phi_F(x)=\frac{F}{x}$ when $x$ is close to 0 and $\Phi_F(x)=\frac{F}{\rho_c}$ when $x$ is close to c. We only concern the case where $\Phi_F$ is given by one of these two expressions, since in the region where $x$ is away from both $0$ and $c$, this conjugation only introduces a smooth and lower bounded weight, and we can write (after localized to this region):
\begin{align*}
e^{i\Phi_F(x)}\varrho = e^{iF/x} \varrho',
\end{align*}
then $\varrho'$ is a smooth positive multiple of $\varrho$ and satisfies all requirements for $\varrho$. Then we can apply the argument for $\Phi_F=F/x$ to this region.

By the definition of $L$ and $I_\varrho$, we know $A_F$ acts by
\begin{align*}
A_Ff(z):= & x^{-2}\int e^{-\Phi_F(x)+ \Phi_F(x(\gamma_{x,y,\lambda,\omega}(t)))} \chi(x,\frac{\lambda}{x},\frac{\lambda}{\rho_c}) f(\gamma_{x,y,\lambda,\omega}(t))
\\ & \varrho((x,y,\lambda,\omega),\gamma_{x,y,\lambda,\omega}(t)) dt |d\nu|,
\end{align*}
with $|d\nu|=|d\lambda d\omega|$ being a smooth density. 

Next we introduce the `one-dimensional version' of $A_F$:
%(choose the local coordinate and the transversal line on which we consider to be $\{y=0\}$),
\begin{align}
A_{1,F} = R \circ A_F \circ \chi_XE.
\end{align}
$E$ extends the function on $\{y=0\}$ to a function on $O_p$ which is constant on each level sets of $\tilde{x}$, $R$ is the restriction operator restricting functions on $O_p$ to $\{y=0\}$; and $\chi_X$ means multiplying by the characteristic function of $X$ after applying $E$.

Recall the parameter $c$ in the definition $O_p=\{\tilde{x} >-c \} \cap \bar{X}$ and we only concern the region $0 \leq x = \tilde{x}+c \leq c$,

\subsection{Ellipticity near the \texorpdfstring{$x=0$}{x=0} }
In this part we show the ellipticity 
of $A_{1,F}$ near the end point $x=0$,
including the ellipcity at the fiber infinity and that of the boundary symbol.

\begin{prop} \label{AFproperty}
$A_F \in \Psi_{sc}^{-1,0}([0,c)_x)$ for $F>0$, where the scattering pseudodifferential algebra is constructed using $x=0$ as the boundary. In addition, if we choose $\chi \in \mathcal{C}^\infty$ appropriately with $\chi \geq 0, \chi(x,0,\frac{\lambda}{\rho_c})=1$, and compactly supported with respect to the second variable when $x<\frac{c}{4}$, its principal symbol, including the boundary symbol, is elliptic.
\end{prop}
\begin{proof}
The proof for the pseudodifferential property of $K_{A_F}(x,y,x',y')$ in dimension two is the same as that in dimension $\geq 3$. 
Thus for the derivation of the decay property of $A_F$'s Schwartz kernel and consequently the membership $A_F\in \Psi_{sc}^{-1,0}$ (this scattering pseudodifferential algebra is for the two dimensional region $\{ x \geq 0\}$, taking
$\{x=0\}$ as the boundary surface), we refer readers to \cite[Section~3.5]{uhlmann2016inverse} or \cite[Section~5]{stefanov2004stability}. 
For our $A_{1,F}$, the effect of the extension (by constant) operator to the Schwartz kernel is integrating $K_{A_F}$ against $y'$. 
%over a compact interval (determined by $\partial M$).
The restriction operator is evaluating $K_{A_F}$ to $y=0$; both of these two operations do not affect the regularity (conormality in the blown up scattering double space, in fact we concern only the part for $x$) of the Schwartz kernel.
Multiplying $\chi_X(x',y')$ effectively introduces the exist times $\tau_\pm(x,0,\lambda,\omega)$ as upper and lower bounds of the $t-$integral as in (\ref{schwartzkernel1}) below. Since $\tau_\pm$ are smooth away from $x=c$ (i.e., the point $p$), thus the pseudodifferential property is not affected on $[0,c)_x$.

Next we focus on ellipticity here. Since we only concern principal symbol level information, we use (\ref{defn_quantization_2}) as the definition of the quantization, which is more convenient to evaluate in the current setting and does not differ from (\ref{defn_quantization_1}) on the principal symbol level, to compute the principal symbol.

From the definition of $A_{1,F}$, we know its Schwartz kernel is
\begin{align*}
K_{A_{1,F}}(x,x') = \int & e^{-\Phi_F(x) + \Phi_F(x(\gamma_{x,0,\lambda,\omega}(t))) } x^{-2}\chi(x,\frac{\lambda}{x},\frac{\lambda}{\rho_c})\delta(z'-\gamma_{x,0,\lambda,\omega}(t))  
\\ & \varrho((x,0,\lambda,\omega),\gamma_{x,y,\lambda,\omega}(t)) \chi_X(\gamma(t)) dt |d\nu| dy'
\\ = (2 \pi)^{-n} \int & e^{-\Phi_F(x) + \Phi_F(x(\gamma_{x,0,\lambda,\omega}(t))) }    x^{-2}\chi(x,\frac{\lambda}{x},\frac{\lambda}{\rho_c})  e^{-i (\xi',\eta') \cdot ((x',y')-\gamma_{x,0,\lambda,\omega}(t))} 
 \\ & \varrho((x,0,\lambda,\omega),\gamma_{x,0,\lambda,\omega}(t)) \chi_X(\gamma(t))  dt |d\nu|dy'd\xi'd\eta'.
\end{align*}
Evaluating the $\eta'-$integral, giving $\delta(\gamma^{(2)}_{x,0,\lambda,\omega}(t)-y')$, and the $y'-$integral can be removed (the integrand other than this delta function is independent of $y'$), and the oscillatory factor is $e^{-i\xi'(x-\gamma_{x,0,\lambda,\omega}(t))}$.
%with $\delta(z'-\gamma_{x,0,\lambda,\omega}(t))$ replaced by $\delta(x'-\gamma^{(1)}_{x,0,\lambda,\omega}(t))$.
%we wrote in this manner to make the connection to the Schwartz kernel without the extension and restriction operation for reducing the dimension.
By introducing the exit time, we can remove the $\chi_X(\gamma(t))$ factor by restricting the range of integration of $t$.
\begin{defn}
We define $\tau_+(x,y,\lambda,\omega) \geq 0$ to be the travel time (in our coordinate patch) from $(x,y)$ to $\partial X$ in the forward direction and $\tau_-(x,y,\lambda,\omega) \leq 0$
to be the travel time to $\partial X$ in the brackward direction.
\end{defn}

Then the Schwartz kernel can be written as
\begin{align}  \label{schwartzkernel1}
\begin{split}
K_{A_{1,F}}(x,x') = & (2 \pi)^{-n} \int_{\nu,\xi'} \int_{\tau_-(x,0,\lambda,\omega)}^{\tau_+(x,0,\lambda,\omega)} 
e^{-\frac{F}{x} +\frac{F}{x(\gamma_{x,0,\lambda,\omega}(t))}}   x^{-2}\chi(x,\frac{\lambda}{x},\frac{\lambda}{\rho_c})
 \\& e^{-i \xi' \cdot (x'-\gamma^{(1)}_{x,0,\lambda,\omega}(t))} 
 \varrho((x,0,\lambda,\omega),\gamma_{x,0,\lambda,\omega}(t))  dt |d\nu| d\xi'.
 \end{split}
\end{align}
Then convert this to the symbol by inverse Fourier transform in $x'$ evaluated at $\xi$, and this combined with the $e^{-i\xi' \cdot x' }$ gives $\delta_0(\xi-\xi')$, 
effectively replacing $\xi'$ in the expression by $\xi$,
%while the term $e^{i\eta' (y(\gamma(t))-y')}$ is left  (use $\gamma(t)$ to represent $\gamma_{x,0,\lambda,\omega}(t)$ below),
we have
\begin{align*}
a_{1,F} =  &  (2 \pi)^n e^{-ix\xi} \mathcal{F}^{-1}_{x'\rightarrow \xi}K_{A_{1,F}}(x,x')  \\
  =  & \int_{\lambda,\omega} \int_{\tau_-(x,0,\lambda,\omega)}^{\tau_+(x,0,\lambda,\omega)}    e^{-\frac{F}{x} +\frac{F}{x(\gamma_{x,y,\lambda,\omega}(t))}} x^{-2}\chi(x,\frac{\lambda}{x},\frac{\lambda}{\rho_c})  e^{-i x {\xi} } e^{i {\xi} x(\gamma(t))}
  \\ & \varrho((x,0,\lambda,\omega),\gamma_{x,y,\lambda,\omega}(t))  dt  d\lambda d\omega.
\end{align*}
We write the coordinates in the scattering cotangent bundle near $x=0$ as 
\begin{align*}
\xi_{sc,0} \frac{dx}{x^2}.
\end{align*} 
%In our context, all of them are scalars. 
%While in the general $n-$dimensional case, the first component is of dimension one, and the second component has dimension $n-1$. 
The expression above become, with $\varrho$ standing for $\varrho((x,0,\lambda,\omega),\gamma_{x,0,\lambda,\omega}(t))$,
\begin{align}\label{eq3}
\begin{split}
a_{1,F}(x,\xi_{sc,0}) =  & \int_{\lambda,\omega} \int_{\tau_-(x,0,\lambda,\omega)}^{\tau_+(x,0,\lambda,\omega)} e^{-\frac{F}{x} +\frac{F}{x(\gamma_{x,0,\lambda,\omega}(t))}}  x^{-2}\chi(x,\frac{\lambda}{x},\frac{\lambda}{\rho_c})
\\& e^{i \frac{\xi_{sc,0} }{x^2} \cdot (\gamma^{(1)}_{x,0,\lambda,\omega}(t) - x) }  \varrho dt d\lambda d\omega.
\end{split}
\end{align}
Next we investigate the phase function of this oscillatory integral and then apply the stationary phase lemma. We denote components of $\gamma_{x,y,\lambda,\omega}(t)$ and its derivatives by 
\begin{align}
\begin{split}
&\gamma_{x,y,\lambda,\omega}(0) = (x,y), \quad \dot{\gamma}_{x,y,\lambda,\omega}(0) = (\lambda,\omega), \\
& \ddot{\gamma}_{x,y,\lambda,\omega}(t) = 2 (\alpha(x,y,\lambda,\omega,t), \beta(x,y,\lambda,\omega,t)),
\end{split}
\label{gamma}
\end{align}
where $\alpha,\beta$ are defined by this equation and are smooth with respect to their variables. In addition, $\alpha$ is a quadratic form in $\omega$, and it is strictly positive definite in $\omega$ for small enough $x,\lambda,t$, which means being close to the starting point at the boundary, by our convexity condition.

Since $\gamma_{x,y,\lambda,\omega}$ starts at $(x,y)$ with initial velocity $(\lambda,\omega)$, there exists
smooth functions $\Gamma^{(1)},\Gamma^{(2)}$ such that
$$
\gamma_{x,y,\lambda,\omega}(t) = ( x + \lambda t + \alpha t^2 + \Gamma^{(1)}(x,y,\lambda,\omega,t)t^3, y+ \omega t +  \Gamma^{(2)}(x,y,\lambda,\omega,t)t^2),
$$
where we only expand the second component to the first order and have included the $\beta-$term in the definition of $\Gamma^{(2)}$. Then we make the change of variables
$$
\hat{t}_0 = \frac{t}{x}, \quad \hat{\lambda}_0 =\frac{\lambda}{x}.
$$
By the support condition of $\chi$, the integrand is none-zero when $\hat{\lambda}_0$ is in a compact interval. However, the bound on $\hat{t}_0$ is $|\hat{t}_0| \leq \frac{T_g}{x}$, which is not uniformly bounded, we amend this by treating it in two regions separately. Using these new variables, we rewrite our phase as
$$
\phi = \xi_{sc,0}(\hat{\lambda}_0\hat{t}_0 + \alpha \hat{t}_0^2 + x\hat{t}_0^3 \Gamma^{(1)}(x,0,x\hat{\lambda}_0,\omega,x\hat{t}_0)).
$$
The damping factor coming from exponential conjugation is
\begin{align*}
-\frac{F}{x} + \frac{F}{\gamma^{(1)}_{x,0,\lambda,\omega}(t)} = & -F(\lambda t + \alpha t^2 + t^3 \Gamma^{(1)}(x,0,x\hat{\lambda}_0,\omega,x\hat{t}_0)) \\ 
& \times (x(x+\lambda t + \alpha t^2 +t^3 \Gamma^{(1)}(x,0,x\hat{\lambda}_0,\omega,x\hat{t}_0)))^{-1}   \\
=& -F(\hat{\lambda}_0\hat{t}_0 +\alpha \hat{t}_0^2 +\hat{t}_0^3x \hat{\Gamma}^{(1)}(x,0,x\hat{\lambda}_0,\omega,x\hat{t}_0)),
\end{align*}
where $\hat{\Gamma}^{(i)}$ is introduced when we first express $\gamma^{(1)}_{x,0,\lambda,\omega}(t)$ by variables $t,\lambda$, and then invoke our change of variables, then collect the remaining terms, which is a smooth function of these normalized variables. So this amplitude is Schwartz in $\hat{t}_0$, hence we take a constant $\epsilon_t>0$ and
deal with regions $|\hat{t}_0| \geq \epsilon_t$ and $|\hat{t}_0|< \epsilon_t$ separately. In our later arguments, we will take $\epsilon_t$ small
to enforce $\hat{t}_0=0$ holds for critical points.

Before considering the critical points of the phase for small $x>0$, we first consider the critical points of the phase at $x=0$. This helps us to get rid of 
those ${\Gamma}^{(i)}-$terms and simplifies the process to solve the equation for the critical points.

When $x=0$, the phase becomes
$$
\xi_{sc,0} (\hat{\lambda}_0\hat{t}_0 + \alpha \hat{t}_0^2).
$$
When $|\hat{t}_0| \geq \epsilon_t$, the derivative with respect to $\hat{\lambda}_0$ vanishes only when $\xi_{sc,0}=0$. 
Since our analysis on the ellipticity is happening away from the zero section, so the
lack of critical points means the region $|\hat{t}_0| \geq \epsilon_t$ gives rapid decay contribution. The case $x>0$ can be dealt with the same method, but with more complicated computation. Notice that, $\alpha,\Gamma^{(i)}$ take $\lambda = x\hat{\lambda}_0, t = x \hat{t}_0$ as variables, and produces an extra $x$ factor when we take partial derivatives with respect to $\hat{\lambda}_0,\hat{t}_0$. Concretely, the derivative with respect to $\hat{\lambda}_0$ is:
\begin{align*}
\frac{\partial \phi}{\partial \hat{\lambda}_0} &= \xi_{sc,0} \hat{t}_0(1+x\hat{t}_0\partial_\lambda \alpha + x^2 \hat{t}_0^2 \partial_\lambda \Gamma^{(1)})\\
&= \xi_{sc,0} \hat{t}_0(1+t\partial_\lambda \alpha + t^2 \partial_\lambda \Gamma^{(1)}).
\end{align*}
Recall that $|t| \leq T_g$ and we can choose $T_g$ to be small by shrinking $O_p$.
Thus when both $|\xi_{sc,0}| \geq C |\eta|$ and $|\hat{t}_0| \geq \epsilon_t$ hold,
$\xi_{sc,0}\hat{t}_0$ is non-zero and is going to dominate other terms, 
so $\frac{\partial \phi}{\partial \hat{\lambda}_0}$ can not vanish and there is no critical point in this case.
Since the $\epsilon_t$ in arguments above is arbitrary, we know that the condition $\hat{t}_0 = 0$ holds for any critical point including the $x \neq 0$ case.

Next we consider the region $|\hat{t}_0|<\epsilon_t$, whose closure is compact, and consequently we can apply the stationary phase lemma. The same as before, we consider the condition that the derivative with respect to $\hat{\lambda}_0$ and $\hat{t}_0$ vanish. First consider the $x=0$, in which case the expression can be significantly simplified:
$$
\xi_{sc,0} \hat{t}_0 = 0, \quad \xi_{sc,0} \hat{\lambda}_0 = 0.
$$
We exclude $\xi_{sc,0}=0$ case since we only concern ellipticity away from the zero section. 
%since that implies $\eta=0$, but $\xi_{sc,0},\eta$ can't vanish at the same time. This is the major difference between 2 dimensional case and the higher dimensional case. 
Then we have the condition for critical points:
$$
\hat{t}_0 = 0, \quad \hat{\lambda}_0 = 0.
$$
The second condition can be derived if we notice that (for general $x$):
$$
\frac{\partial \phi}{\partial \hat{t}_0} = \xi_{sc,0}\hat{\lambda}_0  + O(\hat{t}_0),
$$
where the $O(\hat{t}_0)$ term vanishes when $\hat{t}_0=0$, and can be computed explicitly:
$$
2\xi_{sc,0} \alpha\hat{t}_0 + 3\rho_c \xi_{sc,0} \Gamma^{(1)} \hat{t}_0^2 + \xi_{sc,0} x^2 \partial_t\Gamma^{(1)} \hat{t}_0^3.
$$
So those two conditions for stationary points extends to the $x \neq 0$ case.
In order to apply those conditions of critical points of the phase, we first rewrite (\ref{eq3}) as:
\begin{align}
\begin{split}
&  a_{1,F}(x,\xi_{sc,0}) =  \int e^{-F(\hat{\lambda}_0 \hat{t}_0 + \alpha \hat{t}_0^2 + \hat{t}_0^3x \hat{\Gamma}^{(1)}(x,0,x\hat{\lambda}_0,\omega,x\hat{t}_0) )} \chi(x,\hat{\lambda}_0,\frac{\lambda}{c-x}) 
\\& \varrho((x,0,x\hat{\lambda}_0,\omega),
\gamma_{x,0,x\hat{\lambda}_0,\omega}(x\hat{t}_0)) e^{i\xi_{sc,0}(\hat{\lambda}_0 \hat{t}_0 + \alpha \hat{t}_0^2 + \hat{t}_0^3x {\Gamma}^{(1)}(x,0,x\hat{\lambda}_0,\omega,x\hat{t}_0))} d\hat{t}_0 d\hat{\lambda}_0d\omega,
\end{split}
\label{aF2}
\end{align}
where integrating over $\omega$ is just summing two terms at $\pm 1$. 
By stationary phase lemma with a non-degenerate critical point, the leading contribution comes from the critical points of the phase $\{\hat{t}_0=0, \hat{\lambda}_0 = 0 \}$. The $(\hat{t}_0,\hat{\lambda}_0)$-Hessian of the phase at the critical points is:
$$
\begin{pmatrix} 2 \alpha \xi_{sc,0} & \xi_{sc,0} \\ \xi_{sc,0} & 0  \end{pmatrix},
$$
which has determinant $-\xi_{sc,0}^2$. So the asymptotic behaviour of the integral as $|\xi_{sc,0}| \rightarrow \infty$ is the same as (up to a non-zero constant factor, and use the symmetry of $\varrho$ with respect to the vector fiber part)
\begin{align}
\label{symbol_main_term}
|\xi_{sc,0}|^{-1}\chi(x,0,0) \varrho( (x,0,0,\pm 1), (x,0) ) .
\end{align}
By our choice of $\chi$, $\chi(x,0,0)>0$, and we get a $-1$ order elliptic estimate for both $x>0$ and $x=0$.

Next we turn to show boundary part of the principal symbol of $A_{1,F}$ is also elliptic (when the fiber variable is finite). 
Evaluating (\ref{aF2}) at $x=0$, since $\omega = \pm 1$, the boundary principal symbol of $A_{1,F}$ is
\begin{align*}
a_{1,F}(0,\xi_{sc,0}) = &  \int e^{-F(\hat{\lambda}_0 \hat{t}_0 + \alpha \hat{t}_0^2)} \chi(\hat{\lambda}_0) e^{i \xi_{sc,0}(\hat{\lambda}_0 \hat{t}_0 + \alpha \hat{t}_0^2)} \varrho((0,0,0,\omega),\gamma_{0,y,0,\omega}(0))   d\hat{t}_0  d\hat{\lambda}_0d\omega\\
& = \varrho((0,0,0,1),(0,0))\int e^{-F(\hat{\lambda}_0 \hat{t}_0 + \alpha \hat{t}_0^2)} \chi(0,\hat{\lambda}_0,0) e^{i\xi_{sc,0}(\hat{\lambda}_0 \hat{t}_0 + \alpha \hat{t}_0^2 ) } d\hat{t}_0 d\hat{\lambda}_0  \\
&+ \varrho((0,0,0,-1),(0,0)) \int e^{-F(\hat{\lambda}_0 \hat{t}_0 + \alpha \hat{t}_0^2)} \chi(0,\hat{\lambda}_0,0) e^{i \xi_{sc,0}(\hat{\lambda}_0 \hat{t}_0 + \alpha \hat{t}_0^2 ) } d\hat{t}_0 d\hat{\lambda}_0
\end{align*}
Now $\alpha(x,0,x\hat{\lambda}_0,\omega) = \alpha(0,0,0,\pm 1): = \alpha$, which is a constant in the integrals. Here we used the fact that $\alpha(x,y,\lambda,\omega)$ is a quadratic form in the fibre variable $\omega$, hence changing the sign of $\omega$ does not change its value. 
%Now an observation is that we can allow $\chi$ to depend on $y$ and denote it by $\chi(s,y)$. 
We choose $\chi$ to be a Gaussian density (with respect to $\hat{\lambda}_0$) first, then we use approximation argument to obtain one that has compact support in $\hat{\lambda}_0$. We choose $\chi(0,\hat{\lambda}_0,0)= e^{-\frac{F\hat{\lambda}_0^2}{2\alpha}}$, then $a_{1,F}(0,\xi_{sc,0})$ can be rewritten as
\begin{align*}
& \int  (\int e^{-F\hat{\lambda}_0 \hat{t}_0-\frac{F\hat{\lambda}_0^2}{2\alpha} + i\xi_{sc,0} \hat{\lambda}_0\hat{t}_0} d\hat{\lambda}_0 )e^{-F\alpha \hat{t}_0^2 +i\xi_{sc,0} \alpha \hat{t}_0^2} d\hat{t}_0
\end{align*}
The integral in $\hat{\lambda}_0$ is a Fourier transform of Gaussian density, it is $\sqrt{\frac{2\pi \alpha}{F}} e^{\frac{\alpha F \hat{t}_0^2}{2}-i\xi_{sc,0} \alpha \hat{t}_0^2 -\frac{\alpha}{2F}\hat{t}_0^2\xi_{sc,0}^2}$, and we can also get a similar expression for the term with $\omega = -1$. Thus we need to compute:
\begin{align*}
\int e^{-\frac{\alpha}{2F}(F^2+\xi_{sc,0}^2) \hat{t}_0^2 } d\hat{t}_0,
\end{align*}
which is again a Gaussian type integral, and it equals to a constant multiple of $\sqrt{\frac{F}{\alpha}}(F^2+\xi_{sc,0}^2)^{-\frac{1}{2}}$, and the other term associated to $\omega=-1$ gives the same contribution. Finally, with a constant factor $\hat{C}$, we have (again using symmetry of $\varrho$):
\begin{align}  \label{symbol_boundary}
a_{1,F}(0,\xi_{sc,0}) & = \hat{C} \varrho((0,0,0,1),(0,0))(\sqrt{\frac{F}{\alpha}}(F^2+\xi_{sc,0}^2)^{-\frac{1}{2}}),
\end{align}
which proves ellipticity of the boundary principal symbol.
%In addition, (\ref{symbol_boundary}) is stronger than this ellipticity for fixed $c$. Notice that the constants in (\ref{symbol_boundary}) are uniformly bounded for all choices of small $c$, thus we can choose $c$ small and it will be lower bounded by a positive constant for $0 \leq x \leq c/3$ by continuity (which is uniform in terms of choices of $c$).

Now we amend the compact support issue. Let $\chi$ be a Gaussian as above, which generates an elliptic operator, then we pick a sequence $\chi_n \in \mathcal{C}_c^\infty(\mathbb{R})$ converges to $\chi$ in the Schwartz function space $\mathcal{S}(\mathbb{R})$. Then we can obtain the convergence of their partial Fourier transform
in $\hat{\lambda}_0$: $\hat{\chi_n}$ to $\hat{\chi}$ in Schwartz function space. This gives us the convergence of $X-$Forier transform in $\mathcal{S}(\mathbb{R})$. In particular, we obtain the convergence of $|\zeta|a_{1,F;n}(x,\xi_{sc,0})$, the symbol obtained from $\chi_n$, in the $\mathcal{C}^0$ topology, which is enough to derive an elliptic type estimate for $\chi_n$ with large enough $n$.
\end{proof}

\subsection{Ellipticity near the \texorpdfstring{$x=c$}{x=c} }
In this part we show the pseudodifferential property and the ellipticity of $a_{1,F}$ near $\Sigma_c$, which is the level set of $\tilde{x}$ that is tangent to $\partial X$.
The pseudodifferential property needs more justification compared with
the case near the artificial boundary due to the singularity of $\tau_{\pm}(x,0,\lambda,\omega)$ at $x=c$. 

First we show that the singularity of $\tau_\pm(x,0,\lambda,\omega)$ as $x \rightarrow c$ is of square root type. 
Recall $\rho_c=c-x$, we have following characterization of the escape times:
\begin{lmm} When $c$ is small, the escape times
from the transversal $\{y=0\}$ we choose has a square root
type singularity when $\rho_c \rightarrow 0$. Concretely, we have
\label{lmm_singularity_escapetime}
\begin{align*}
\tau_\pm(x,0,\lambda,\omega) = \sqrt{\rho_c} \hat{\tau}_\pm(x,\lambda,\omega)
\end{align*}
with $\hat{\tau}_{\pm}$ being a smooth function, when $\rho_c$ is small and $|\lambda| \leq C \rho_c$. In addition, $\hat{\tau}_\pm$ is bounded away from 0.
\end{lmm}

\begin{proof}
Recall (\ref{tildex}), we know that on the boundary we have
\begin{align}
\rho_c = -\tilde{x} = \epsilon |z-p|^2 = \epsilon y^2 + \epsilon \rho_c^2,
\end{align}
which gives
\begin{align}
y = \pm \epsilon^{-1/2}\rho_c^{1/2}(1-\rho_c)^{1/2}
\end{align}
With starting velocity $\omega=\pm 1$ on the $y-$direction, and the velocity on this direction (this component of fiber part on the tangent bundle) varies smoothly,
and the factorization as in the lemma follows.
\end{proof}

Return to the proof of the pseudodifferential property and the ellipticity.
%We write $\gamma_{x,0,\lambda,\omega}$ as $\gamma$ for short when there is no confusion, 
We write $\rho_c(\gamma(t))$ for $\rho_c-$component of the
point $\gamma(t)$. When we write $\rho_c$ alone, we mean $\rho_c(\gamma(0))$,
i.e., this component of the starting point.

Similar to the case near $x=0$ but now with $\Phi_F(x)=\frac{F}{\rho_c}$, the Schwartz kernel is:
\begin{align*}
K_{A_{1,F}}(\rho_c,\rho_c') = \int & e^{\frac{F}{\rho_c} - \frac{F}{\rho_c(\gamma_{x,0,\lambda,\omega}(t))}} \rho_c^{-2}\chi(x,\frac{\lambda}{x},\frac{\lambda}{\rho_c})\delta(z'-\gamma_{x,0,\lambda,\omega}(t))  
\\ & \varrho((x,0,\lambda,\omega),\gamma_{x,0,\lambda,\omega}(t)) \chi_X(\gamma(t)) dt |d\nu| dy'
\\ = (2 \pi)^{-n} \int & e^{\frac{F}{\rho_c}  - \frac{F}{\rho_c(\gamma_{x,0,\lambda,\omega}(t))}}   \rho_c^{-2}\chi(x,\frac{\lambda}{x},\frac{\lambda}{\rho_c})  e^{-i \zeta' \cdot ((\rho_c',y')-\gamma_{x,0,\lambda,\omega}(t))} 
 \\ & \varrho((x,0,\lambda,\omega),\gamma_{x,0,\lambda,\omega}(t)) \chi_X(\gamma(t))  dt |d\nu|d\zeta'dy'.
\end{align*}
As before, using the escape time, we can remove $\chi_X$ in the expression above:
\begin{align*}
K_{A_{1,F}}(\rho_c,\rho_c') = & (2 \pi)^{-n} \int_{\nu,\zeta'} \int_{\tau_-(x,0,\lambda,\omega)}^{\tau_+(x,0,\lambda,\omega)} 
e^{\frac{F}{\rho_c} - \frac{F}{\rho_c(\gamma_{x,0,\lambda,\omega}(t))}}   \rho_c^{-2}\chi(x,\frac{\lambda}{x},\frac{\lambda}{\rho_c})
 \\& e^{-i \zeta' \cdot ((\rho_c',y')-\gamma_{x,0,\lambda,\omega}(t))} 
 \varrho((x,0,\lambda,\omega),\gamma_{x,0,\lambda,\omega}(t))  dt |d\nu|d\zeta' dy '.
\end{align*}
Then convert this to the symbol by inverse Fourier transform in $\rho_c'$ evaluated at $\xi$, and this combined with 
the $e^{-i\zeta' \cdot (\rho_c',y') }$ gives $\delta_0(\xi-\xi')$, while the term
$e^{i\eta' (y(\gamma(t))-y')}$ is left (use $\gamma(t)$ to represent $\gamma_{x,0,\lambda,\omega}(t)$ below),
we have
\begin{align*}
a_{1,F}(\rho_c,\xi_{sc,c}) =  &  (2 \pi)^n e^{-i\rho_c\xi } \mathcal{F}^{-1}_{\rho_c'\rightarrow \xi }K_{A_{1,F}}(\rho_c,\rho_c')  \\
  =  \int_{\lambda,\omega,\eta'} \int_{\tau_-(x,0,\lambda,\omega)}^{\tau_+(x,0,\lambda,\omega)}   & e^{\frac{F}{\rho_c} - \frac{F}{\rho_c(\gamma_{x,0,\lambda,\omega}(t))}} \rho_c^{-2}\chi(x,\frac{\lambda}{x},\frac{\lambda}{\rho_c})  e^{-i \rho_c {\xi} } e^{i {\xi} \rho_c(\gamma(t))} e^{i\eta'(y(\gamma(t))-y')} 
  \\ & \varrho((x,0,\lambda,\omega),\gamma_{x,0,\lambda,\omega}(t))  dt  d\lambda d\omega d\eta' dy'.
\end{align*}
The $\eta'-$integral above produces a $\delta(y(\gamma(t))-y')$ and we can
evalute the $y'-$integration leaving the expression unchanged,
and we substitute in $\xi_{sc,c}=\frac{\xi}{\rho_c^2}$:
\begin{align}
\begin{split}
a_{1,F}(x,\xi_{sc,c}) =  &  (2 \pi)^n e^{-ix\xi} \mathcal{F}^{-1}_{\rho_c'\rightarrow \xi}K_{A_{1,F}}(x,\rho_c')  \\
  =  \int_{\lambda,\omega} \int_{\tau_-(x,0,\lambda,\omega)}^{\tau_+(x,0,\lambda.\omega)} & e^{\frac{F}{\rho_c} - \frac{F}{\rho_c(\gamma_{x,0,\lambda,\omega}(t))}} \rho_c^{-2}\chi(x,\frac{\lambda}{x},\frac{\lambda}{\rho_c})  e^{-i \rho_c \frac{\xi_{sc,c}}{\rho^2_c(\gamma(0))} } e^{i \frac{\xi_{sc,c}}{\rho_c^2(\gamma(0))} \rho_c(\gamma(t))}
  \\ & \varrho((x,0,\lambda,\omega),\gamma_{x,0,\lambda,\omega}(t)) dt  d\lambda d\omega .
\end{split}
\end{align}
Recalling that
\begin{align*}
\rho_c(\gamma(t)) = c-x(\gamma(t)) = \rho_c(\gamma(0))-\lambda t - \alpha t^2 - t^3\Gamma^{(1)},
\end{align*}
the phase is
\begin{align*} 
 - \frac{\xi_{sc,c}}{\rho_c^2}(\lambda t+\alpha t^2 + t^3\Gamma^{(1)}) 
\end{align*}
Introducing rescaled variables:
\begin{align}
\label{rescale_lambda_t}
\hat{\lambda}_c  = \lambda/\rho_c(\gamma(0)), \hat{t}_c = t/\rho_c(\gamma(0)),
\end{align}
we can rewrite the phase as
\begin{align} 
\phi = -\xi_{sc,c}(\hat{\lambda}_c\hat{t}_c+\alpha \hat{t}_c^2+\rho_c \hat{t}_c^3\Gamma^{(1)});
\end{align}
and the exponent introduced by conjugation is written as
\begin{align*}
\frac{F}{\rho_c} - \frac{F}{\gamma^{(1)}_{x,y,\lambda,\omega}(t)} = & -F(\lambda t + \alpha t^2 + t^3 \Gamma^{(1)}(x,y,x\hat{\lambda}_c,\omega,x\hat{t}_c)) \\ 
& \times (\rho_c(\rho_c-\lambda t - \alpha t^2 - t^3 \Gamma^{(1)}(x,y,x\hat{\lambda}_c,\omega,x\hat{t}_c)))^{-1}   \\
=& -F(\hat{\lambda}_c\hat{t}_c +\alpha \hat{t}_c^2 +\hat{t}_c^3x \hat{\Gamma}^{(1)}(x,y,x\hat{\lambda}_c,\omega,x\hat{t}_c)),
\end{align*}
where $\hat{\Gamma}_c^{(i)}$ is introduced when we first express $\gamma^{(1)}_{x,y,\lambda,\omega}(t)$ by variables $t,\lambda$, and then invoke our change of variables, then collect the remaining terms, which is a smooth function of these normalized variables. So this amplitude is Schwartz in $\hat{t}_c$, hence we take a constant $\epsilon_t>0$ and
deal with regions $|\hat{t}_c| \geq \epsilon_t$ and $|\hat{t}_c|< \epsilon_t$ separately. In our later arguments, we will take $\epsilon_t$ small
to enforce $\hat{t}_c=0$ holds for critical points.

%%%%%%%%%%%%%%deal with \chi_X singularity
Next we discuss the potential singularity introduced by $\chi_X(\gamma(t))$. 
This factor is turned into the bound $\tau_\pm(x,0,\lambda,\omega)$ of the $t-$integral, and after rescaling it becomes $\frac{\tau_\pm(x,0,\lambda,\omega)}{\rho_c}=\rho_c^{-1/2}\hat{\tau}_{\pm}(x,\lambda,\omega)$, where we have used Lemma \ref{lmm_singularity_escapetime}.
Under repeated application of $\rho_c\partial_{\rho_c}$, the contribution given by differentiating the bound of intergation and differentiating the integrand is
\begin{align}
\begin{split}
& (\rho_c\partial_{\rho_c})^k( (e^{-F(\hat{\lambda}_c \hat{t}_c + \alpha \hat{t}_c^2 + \hat{t}_c^3\rho_c \hat{\Gamma}^{(1)}(x,0,\rho_c\hat{\lambda}_c,\omega,\rho_c\hat{t}_c) )} \chi(\hat{\lambda}_c) \varrho((x,0,\rho_c\hat{\lambda}_c,\omega),\gamma_{x,0,\rho_c\hat{\lambda}_c,\omega}(\rho_c\hat{t}_c)) \\
& e^{i(\xi(\hat{\lambda}_c \hat{t}_c + \alpha \hat{t}_c^2 + \hat{t}_c^3 \rho_c \hat{\Gamma}^{(1)}(x,0,\rho_c\hat{\lambda}_c,\omega,\rho_c\hat{t}_c)))})
|_{\hat{t}_c=\rho_c^{-1/2}\hat{\tau}_{\pm}(x,\lambda,\omega)} 
\\& (\rho_c^{1/2}\partial_{\rho_c}\hat{\tau}_\pm(x,\lambda,\omega)-\frac{1}{2}\rho_c\hat{\tau}_\pm(x,\lambda,\omega))).
\end{split}
\label{derivative,symbol}
\end{align}
By the smoothness of $\Gamma^{(1)}$, it is uniformly bounded on the region we concern.
As we mentioned, we can make the uniform bound $T_g$ for $|t|$ small by shrinking $O_p$ so that the coefficient of $\hat{t}_c^2-$term in the exponent satisfies
\begin{align}
 \min (\alpha(x,y,\lambda,\omega)+t\hat{\Gamma}^{(1)}(x,0,\lambda,\omega,t)) = \mk{l}>0.
\end{align}
As a result, (\ref{derivative,symbol}) is bounded by a polynomial of $\rho_c^{-1/2}$
(with coefficient being smooth functions of other variables)
times $e^{-F(\mk{l}\rho_c^{-1}\hat{\tau}_{\pm}^2+\hat{\lambda}_c\rho_c^{-1/2}\hat{\tau}_\pm)}$, which is smooth as $\rho_c \rightarrow 0$. 
Thus the potential singularity carried by $\chi_X$ does not affect the 
proof of the pseudodifferential property in \cite[Section~3.5]{uhlmann2016inverse} or \cite[Section~5]{stefanov2004stability}.

As in the previous case, before considering the critical points of the phase for small $\rho_c>0$, we first consider the critical points of the phase at $\rho_c=0$. 
%This helps us to get rid of those $\Gamma^{(i)}-$terms and simplifies the process to solve the equation for the critical points.
When $\rho_c=0$, the phase becomes
$$
-\xi_{sc,c} (\hat{\lambda}_c\hat{t}_c + \alpha \hat{t}_c^2).
$$
When $|\hat{t}_c| \geq \epsilon_t$, the derivative with respect to $\hat{\lambda}_c$ vanishes only when $\xi_{sc,c}=0$. Since our analysis is away from the zero section, the region $|\hat{t}_c| \geq \epsilon_t$ gives rapidly decaying contribution. 

The case $\rho_c>0$ can be dealt with the same method, but with more complicated computation. Notice that, $\alpha,\hat{\Gamma}^{(i)}$ take $\lambda = \rho_c \hat{\lambda}_c, t = \rho_c \hat{t}_c$ as variables, and produces an extra $\rho_c$ factor when we take partial derivatives with respect to $\hat{\lambda}_c,\hat{t}_c$. Concretely, the derivative with respect to $\hat{\lambda}_c$ is:
\begin{align*}
\frac{\partial \phi}{\partial \hat{\lambda}_c} &= -\xi_{sc,c} \hat{t}_c(1+x\hat{t}_c\partial_\lambda \alpha + x^2 \hat{t}_c^2 \partial_\lambda \Gamma^{(1)})\\
&= -\xi_{sc,c} \hat{t}_c(1+t\partial_\lambda \alpha + t^2 \partial_\lambda \Gamma^{(1)}).
\end{align*}
Recall that $|t| \leq T_g$ and we can choose $T_g$ to be small by shrinking $O_p$.
Thus when $|\hat{t}_c| \geq \epsilon_t$,
$\xi_{sc,c}\hat{t}_c$ is non-zero and is going to dominate other terms, 
so $\frac{\partial \phi}{\partial \hat{\lambda}_c}$ can not vanish and there is no critical point in this case. Since the $\epsilon_t$ in arguments above is arbitrary, we know that the condition $\hat{t}_c = 0$ holds for any critical point including the $\rho_c \neq 0$ case.

Next we consider the region $|\hat{t}_c|<\epsilon_t$, whose closure is compact, and consequently we can apply the stationary phase lemma. The same as before, we consider the condition that the derivative with respect to $\hat{\lambda}_c$ and $\hat{t}_c$ vanish. First consider the $\rho_c=0$, in which case the expression can be significantly simplified:
$$
\xi_{sc,c} \hat{t}_c = 0, \quad \xi_{sc,c} \hat{\lambda}_c  = 0.
$$
We exclude $\xi_{sc,c}=0$ case since we are away from the zero section. Then we have the condition for critical points:
$$
\hat{t}_c = 0, \quad \xi_{sc,c} \hat{\lambda}_c  = 0.
$$
 The second condition can be derived if we notice that (for general small $\rho_c$):
$$
\frac{\partial \phi}{\partial \hat{t}_c} = -\xi_{sc,c}\hat{\lambda}_c + O(\hat{t}_c),
$$
where the $O(\hat{t}_c)$ term vanishes when $\hat{t}_c=0$, and can be computed explicitly:
$$
-(2\xi_{sc,c} \alpha \hat{t}_c + 3x \xi_{sc,c} \Gamma^{(1)} \hat{t}_c^2 + \xi_{sc,c} x^2 \partial_t\Gamma^{(1)} \hat{t}_c^3).
$$
So those two conditions for stationary points extends to the $\rho_c \neq 0$ case.
In order to apply those conditions of critical points of the phase, we first rewrite (\ref{eq3}) as:
\begin{align}
\begin{split}
a_{1,F}(z,\xi_{sc,c})  &  = \int e^{-F(\hat{\lambda}_c \hat{t}_c + \alpha \hat{t}_c^2 + \hat{t}_c^3\rho_c \hat{\Gamma}^{(1)}(x,0,\rho_c\hat{\lambda}_c,\omega,\rho_c\hat{t}_c) )} \chi(\hat{\lambda}_c) \varrho((x,0,\rho_c\hat{\lambda}_c,\omega),\gamma_{x,0,\rho_c\hat{\lambda}_c,\omega}(\rho_c\hat{t}_c)) \\
&e^{-i(\xi_{sc,c}(\hat{\lambda}_c \hat{t}_c + \alpha \hat{t}_c^2 + \hat{t}_c^3 \rho_c \hat{\Gamma}^{(1)}(x,0,\rho_c\hat{\lambda}_c,\omega,\rho_c\hat{t}_c)))} d\hat{t}_c d\hat{\lambda}_cd\omega,
\end{split}
\label{aF2_c}
\end{align}
where integrating over $\omega$ is just summing two terms at $\pm 1$. 
By stationary phase lemma with a non-degenerate critical point, the leading contribution comes from the critical points of the phase $\{\hat{t}_c=0, \xi_{sc,c} \hat{\lambda}_c = 0 \}$. 
%The second equation defining the critical set has two solutions $\hat{\lambda}_c_\pm = \mp \frac{\eta}{\xi}$ corresponding to $\omega=\pm 1$.
The $(\hat{t}_c,\hat{\lambda}_c)$-Hessian of the phase at the critical points is:
$$
-\begin{pmatrix} 2 \alpha \xi_{sc,c} & \xi_{sc,c} \\ \xi_{sc,c} & 0  \end{pmatrix},
$$
which has determinant $\xi_{sc,c}^2$. So the asymptotic behaviour of the integral as $|\xi_{sc,c}| \rightarrow \infty$ is the same as (up to a non-zero constant factor, and use the symmetry of $\varrho$ with respect to the vector fiber part)
\begin{align}
\label{symbol_main_term_c}
|\xi_{sc,c}|^{-1}\chi(0) \varrho((x,0,\lambda,\pm 1),(x,0)) .
\end{align}
Choosing $\chi$ such that $\chi \equiv 1$ near 0, we get a $-1$ order elliptic estimate near fiber infinity for both $\rho_c>0$ and $\rho_c=0$.

Next we turn to show boundary part of the principal symbol of $A_{1,F}$ is also elliptic (when the fiber variables are finite). 
Evaluating (\ref{aF2_c}) at $\rho_c=0$ (i.e., $x=c$), since $\omega = \pm 1$, the boundary principal symbol of $A_{1,F}$ is
\begin{align*}
a_{1,F}(0,y,\zeta) & = \int e^{-F(\hat{\lambda}_c \hat{t}_c + \alpha \hat{t}_c^2)} \chi(\hat{\lambda}_c) e^{-i \xi_{sc,c}(\hat{\lambda}_c \hat{t}_c + \alpha \hat{t}_c^2 ) }  \varrho((c,0,0,\omega),\gamma_{c,0,0,\omega}(0))   d\hat{t}_c  d\hat{\lambda}_cd\omega\\
& = \varrho((c,0,0,1),(0,y))\int e^{-F(\hat{\lambda}_c \hat{t}_c + \alpha \hat{t}_c^2)} \chi(\hat{\lambda}_c) e^{-i \xi_{sc,c}(\hat{\lambda}_c \hat{t}_c + \alpha \hat{t}_c^2 ) } d\hat{t}_c d\hat{\lambda}_c  \\
&+ \varrho((c,0,0,-1),(0,y)) \int e^{-F(\hat{\lambda}_c \hat{t}_c + \alpha \hat{t}_c^2)} \chi(\hat{\lambda}_c) e^{-i \xi_{sc,c}(\hat{\lambda}_c \hat{t}_c + \alpha \hat{t}_c^2 ) } d\hat{t}_c d\hat{\lambda}_c
\end{align*}
Now $\alpha(c,0,\rho_c\hat{\lambda}_c,\omega) = \alpha(c,0,0,\pm 1): = \alpha$, which is a constant in the integrals. Here we used the fact that $\alpha(c,0,0,\omega)$ is a quadratic form in the fibre variable $\omega$, hence changing the sign of $\omega$ does not change its value. 
%Now an observation is that we can allow $\chi$ to depend on $y$ and denote it by $\chi(x,\frac{\lambda}{x},\frac{\lambda}{\rho_c},y)$. 
We choose $\chi$ to be a Gaussian density (with respect to $\hat{\lambda}_c$) first, then we use approximation argument to obtain one that has compact support in $\hat{\lambda}_c$. We choose $\chi(x,\frac{\lambda}{x},\frac{\lambda}{\rho_c})= e^{-\frac{F(\lambda/\rho_c)^2}{2\alpha(0)}}$ when $c-x$ is small, then we have:
\begin{align*}
  & \int e^{-F(\hat{\lambda}_c \hat{t}_c + \alpha \hat{t}_c^2)} \chi(x,\hat{\lambda}_0,\hat{\lambda}_c) e^{-i \xi_{sc,c}(\hat{\lambda}_c \hat{t}_c + \alpha \hat{t}_c^2 ) } d\hat{t}_c d\hat{\lambda}_c \\
= & \int  (\int e^{-F\hat{\lambda}_c \hat{t}_c-\frac{F\hat{\lambda}_c^2}{2\alpha} - i\xi_{sc,c} \hat{\lambda}_c\hat{t}_c} d\hat{\lambda}_c )e^{-F\alpha \hat{t}_c^2 - i\xi_{sc,c} \alpha \hat{t}_c^2} d\hat{t}_c
\end{align*}
The integral in $\hat{\lambda}_c$ is a Fourier transform of Gaussian density, it is $\sqrt{\frac{2\pi \alpha}{F}} e^{\frac{\alpha F \hat{t}_c^2}{2}+i\xi_{sc,c} \alpha \hat{t}_c^2 -\frac{\alpha}{2F}\hat{t}_c^2\xi_{sc,c}^2}$, and we can also get a similar expression for the term with $\omega = -1$. Thus we need to compute:
\begin{align*}
\int e^{-\frac{\alpha}{2F}(F^2+\xi_{sc,c}^2) \hat{t}_c^2} d\hat{t}_c,
\end{align*}
which is again a Gaussian type integral, and it equals to a constant multiple of $\sqrt{\frac{F}{\alpha}}(F^2+\xi_{sc,c}^2)^{-\frac{1}{2}}$. The other term with $\omega=-1$ gives the same contribution. Finally, with a constant factor $\hat{C}$, we have (again using symmetry of $\varrho$):
\begin{align}  \label{symbol_boundary_c}
a_{1,F}(0,y,\zeta) & = \hat{C}\varrho((c,0,0,1),(c,0))(\sqrt{\frac{F}{\alpha}}(F^2+\xi_{sc,c}^2)^{-\frac{1}{2}}),
\end{align}
which proves ellipticity of the boundary principal symbol.

Now we amend the compact support issue. Let $\chi$ be a Gaussian as above, which generates an elliptic operator, then we pick a sequence $\chi_n \in \mathcal{C}_c^\infty(\mathbb{R})$ converges to$\chi$ in the Schwartz function space $\mathcal{S}(\mathbb{R})$. Then we can obtain the convergence of $\hat{\chi_n}$ to $\hat{\chi}$ in Schwartz function space. 
%This gives us the convergence of $X-$Forier transform in $\mathcal{S}(\mathbb{R})$. 
In particular, we obtain the convergence of $ |\xi_{sc,c}| a_{n,F}(x,\xi_{sc,c})$, the symbol obtained from $\chi_n$, in the $\mathcal{C}^0$ topology, which is enough to derive an elliptic type estimate for $\chi_n$ with large enough $n$.

\section{The proof of the main theorem}
\label{main_proof}
Fix $c_0$ small and apply results in previous sections to $\Omega_{c_0}$, estimates above are uniform with respect to $c \in (0, c_0]$. 
We let $c$ vary and take $\mk{f} \in \mathcal{F}_{\tilde{x}}(X)$ such that on the region $\Omega_c$ we have $x \leq \mk{f}$ and $\mk{f} = 0$ when $x=0$. 

Denoting the $A_{1,F}$ in the previous section, which constructed for a fixed $c$,
by $A_c$, then by the ellipticity of $A_c$ as scattering pseudodifferential
operator near both boundaries we have its parametrix $G_c$ such that 
\begin{align*}
G_cA_c = \Id+ E_{c}, \, E_{c} \in \Psi^{-\infty,-\infty,-\infty}_{sc,sc}([0,c]).
\end{align*}
This parametrix is constructed by choosing 
\begin{align*}
G_c = \chi_1(x/c)G_{0,c} + (1-\chi_1(x/c))G_{c,c},
\end{align*}
where $\chi_1 \in C_c^\infty(\R)$ is supported in $[-1/2,1/2]$ and equals to $1$
on $[-1/4,1/4]$; $G_{0,c}$ is the parametrix of $A_{1,F}$ as a scattering pseudodifferential operator taking $x=0$ as the boundary, and $G_{c,c}$ is the parametrix of $A_{1,F}$ as a scattering pseudodifferential operator taking $x=c$ as the boundary.

%Consider the map $\Psi_c(x,y)=(x/c,y)$, and let $A_c =(\Psi_c^{-1})^*B_c\Psi_c^*,\, E_c =(\Psi_c^{-1})^*(E_{0c})\Psi_c^*$. This conjugation is introduced to make this family of operators to be defined on a fixed region $\bar{M}_0:=[0,1]$. We have an estimate of the error term in terms of $\mk{f}$. To be more precise, 

We consider the Schwartz kernel $K_{E_c}$ of $E_c$, which satisfies 
\begin{align*}|x^{-N}(c-x)^{-N}x'^{-N}(c-x')^{-N}K_{E_c}| \leq C_N \text{ on } [0,c]_x\times [0,c]_{x'}.
\end{align*}
$C_N$ and $C_N'$ below can be chosen to be independent of $c$ since our construction is uniform for small $c$ (in particular, the constant in elliptic estimate and the parametrix construction are uniform for small $c$).

 %Then we insert a truncation factor $\phi_c$ compactly supported in $[0,c]$, and being identically 1 on smaller compact set $K_c$, such that 
This means
\begin{align*}
 |K_{E_c}| \leq C'_N\mk{f}(c)^{4N}x^{2}x'^{2}
\end{align*}
  for all $N$. The $x^2,(x')^2$ factors are introduced to `cancel' the scattering density. Then we apply Schur's lemma on the integral operator bound (together with the aforementioned  $N-th$ power estimate) to conclude that 
\begin{align*}
 ||E_c||_{L^2_{sc,sc}([0,c]) \rightarrow L^2_{sc,sc}([0,c])} \leq C''_N\mk{f}(c)^{4N},
\end{align*}
where $L^2_{sc,sc}$ is the function space with $m=l_1=l_2=0$ in Definition \ref{defn_function}.
In particular, we can take $c$ so that this norm $<1$. 
This guarantees that $\phi_cG_cA_c \phi_c =   \Id + \phi_cE_{c}\phi_c$ is invertible,
where $\phi_c(x)$ is a function supported in $[0,c]$ and is 1 on a smaller compact set $K_c$. Since $K_c$ is arbitrary,
for functions (in $\mathcal{F}_{\tilde{x}}$) supported on $\{\tilde{x} \geq -c\}$, $A_c$ is injective. 
We have
$$
||v||_{H^{s,r,r}_{sc,sc}([0,c])}  \leq C ||A_cv||_{H^{s+1,r,r}_{sc,sc}([0,c])}.
$$
If we recover this expression to $L\circ I_\varrho$, this is (with $f = Ee^{\Phi_F(x)}v$):
$$
||Rf||_{ e^{\Phi_F(x)} H^{s,r,r}_{sc,sc}([0,c]) }  \leq C ||R \circ L\circ I_\varrho f||_{ e^{\Phi_F(x)} H^{s+1,r,r}_{sc}([0,c])}.
$$

Recall our inclusion relationships for polynomially weighted Sobolev spaces, we can get rid of the $r-$indices with the cost of increasing the exponential power of left hand side to $e^{\Phi_{(F+\delta)}(x)}$ with $\delta>0$. That is:
 \begin{align}   \label{main_est_2}
||Rf||_{ e^{\Phi_{(F+\delta)}(x)} H^{s}_{sc,sc}([0,c])}  \leq C ||R \circ L\circ I_\varrho f||_{e^{\Phi_F(x)} H^{s+1}_{sc,sc}([0,c])}.
\end{align}
Finally we consider the boundedness of operators involved. We consider the decomposition 
$$A = R \circ L \circ I_\varrho \circ E.$$
By definition and the compactness of $X \cap \{ x \geq 0\}$ (hence finite length of each level
set of $\tilde{x}$), the extension operaotr $E$ extending a function on $[0,c]$ to a function on $K_c$ by assigning constant value on each level set of $\tilde{x}$ is bounded from sc-sc Sobolev spaces on $[0,c]$ to
the sc-sc Sobolev space on $\tilde{X} \cap \{0 \leq x \leq c\}$ with the same regularity and decay orders. Here the sc-sc Sobolev spaces on $\tilde{X} \cap \{0 \leq x \leq c\}$ take $\{x=0\}$ and $\{x=c\}$ (which are curves in $\tilde{X}$ instead of points in $[0,c]$) as boundaries. The construction is the same as in Section \ref{sec: combined psiDO}.

Then we show that $L$ is bounded. In order to prove this, we decompose $L$ into $L=M_2 \circ \Pi \circ M_1$, with $M_2,\Pi,M_1$ being 
\begin{align*}
 M_1:& H^s([0,c]_x \times \mathbb{R}_y \times \mathbb{R}_\lambda \times \{\pm1\}_\omega) \rightarrow H^s([0,c]_x \times \mathbb{R}_y \times \mathbb{R}_\lambda \times \{\pm1\}_\omega), \\
   &(M_1u)(x,y,\lambda,\omega) = x^s\rho_c^s \chi(x,\frac{\lambda}{x},\frac{\lambda}{\rho_c})u(x,y,\lambda,\omega),\\
 \Pi: & H^s([0,c]_x \times \mathbb{R}_y \times \mathbb{R}_\lambda) \rightarrow H^s([0,c]_x \times \mathbb{R}_y),
 \\& (\Pi u)(x,y) = \int_\mathbb{R} u(x,y,\lambda,1)d\lambda,\\
 M_2:& H^s([0,c]_x \times \mathbb{R}_y) \rightarrow x^{-(s+1)}\rho_c^{-(s+1)}H^s([0,c]_x \times \mathbb{R}_y), 
 \\& (M_2f)(x,y)=x^{-(s+1)}\rho_c^{-(s+1)}f(x,y).\\
\end{align*}
Consider the boundedness of $M_1$ when $s \in \N$ first. The general case follows from interpolation. Consider derivatives of $x^s\rho_c^s \chi(x,\frac{\lambda}{x},\frac{\lambda}{\rho_c})u(x,y,\lambda,\omega)$ up to order $s$. Each order of differentiation on $\chi$ gives an $x^{-1}$ or $\rho_c^{-1}$ factor, which is cancelled by $x^s\rho_c^s$ and the remaining part belongs to $L^2$ by smoothness of $\chi$ and $u\in H^s$. 
$M_2$ is bounded by the definition of the space on the right hand side. The operator $\Pi$ is a pushforward map, integrating over $|\lambda| \leq C |x|, |\lambda| \leq C|\rho_c|$ (notice the support condition after we apply $M_1$), hence bounded.

On the other hand, $I_\varrho$ itself is a bounded operator. This comes from the decomposition $I_\varrho = \tilde{\Pi} \circ \Phi^*$, where $\Phi$ is the geodesic coordinate representation $\Phi(z,\nu,t)=\gamma_{z,\nu}(t)$ and $\tilde{\Pi}$ is integrating against $t$, which is bounded as a pushforward map. Because the initial vector always has length 1 on the tangent component, the travel time is uniformly bounded. $\Phi$ is one component of $\Gamma$ and the later is a diffeomorphism when we shrink the region. So $\Phi$ has surjective differential, hence the pull back is bounded. Consequently $I_\varrho$ is bounded.

Next we discuss the boundedness of $R$: $H_{sc,sc}^{s+\frac{1}{2},r,r}( O_p ) \rightarrow H_{sc,sc}^{s,r,r}([0,c])$. Here the sc-sc Sobolev space on $O_p$ is constructed using $\{\tilde{x}=c\} \cap O_p$ and $\{\tilde{x}=0\} \cap O_p$ as boundaries.
Then we have:
\begin{lmm} \label{lemma: trace}
For $s > 0$, the restriction map
\begin{align*}
R: e^{\Phi_F(x)}H_{sc,sc}^{s+\frac{1}{2}}( O_p ) \rightarrow e^{\Phi_F(x)}H_{sc,sc}^{s}([0,c])
\end{align*}
is bounded.
\end{lmm}
\begin{proof}
We only need to prove boundedness when the function is supported near $\{x=0\}$, and the boundedness when the function is supported near $\{x=c\}$ can be proved in the same way and these two bounedness combined will prove the result. 

We prove the case without the weight first.
Let $v(x,y) \in H_{sc}^{s+\frac{1}{2}}(O_p)$, and denote its Fourier transform (after locally reduce to $\R^n$, which is why we need to assume it to be supported near $\{x=0\}$ only) by $\hat{v}(\xi,\eta)$. Since 
\begin{align*}
Rv(x) = & (2\pi)^{-2}\int e^{i(x,0) \cdot (\xi,\eta)}\hat{v}(\xi,\eta)d\xi d\eta
\\ = & (2\pi)^{-2}\int e^{ix \xi}(\int \hat{v}(\xi,\eta) d\eta)d\xi,
\end{align*}
we know
\begin{align*}
\mathcal{F}_{x \rightarrow \xi}(Rv)(\xi)
= \int \hat{v}(\xi,\eta)d\eta.
\end{align*}
Applying the Cauchy-Schwartz inequality, we have
\begin{align*}
|\mathcal{F}_{x \rightarrow \xi}(Rv)(\xi)|^2
\leq  (\int \la (\xi,\eta) \ra^{-2s-1}d\eta)
(\int \la (\xi,\eta) \ra^{2s+1} |\hat{v}(\xi,\eta)|^2 d\eta ).
\end{align*}
Suppose we introduce $\tilde{\eta} = \frac{\eta}{\la \xi \ra}$, then
\begin{align*}
  & \int \la (\xi,\eta) \ra^{-2s-1}d\eta 
  = \la \xi \ra^{-2s}  \int \la \tilde{\eta} \ra^{-2s-1} d\tilde{\eta}
\lesssim  \la \xi \ra^{-2s}.
\end{align*}
Combining with previous estimates, we have
\begin{align*}
\la \xi \ra^{2s} |\mathcal{F}_{x \rightarrow \xi}(Rv)(\xi)|^2 \lesssim  \int \la (\xi,\eta) \ra^{2s+1} |\hat{v}(\xi,\eta)|^2 d\eta,
\end{align*}
and further integrate with respect to $\xi$ gives
\begin{align*}
||Rv||_{H^{s}_{sc}([0,c))}
\lesssim ||v||_{H^{s}_{sc}(O_p)},
\end{align*}
where $v$ is supported away from $\{x=c\}$ and the
scattering Sobolev space on the right hand side is taking $x=0$ as the boundary surface.
Replacing $v$ by $e^{-\Phi_F(x)}v$ gives
\begin{align*}
||Rv||_{e^{\Phi_F(x)}H^{s}_{sc}([0,c))}
\lesssim ||v||_{e^{\Phi_F(x)}H^{s}_{sc}(O_p)},
\end{align*}

Using the same arguments, we can prove 
\begin{align*}
||Rv||_{e^{\Phi_F(x)}H^{s}_{sc}((0,c])}
\lesssim ||v||_{e^{\Phi_F(x)}H^{s}_{sc}(O_p)},
\end{align*}
where $v$ is supported away from $\{x=0\}$ and the
scattering Sobolev space on the right hand side is taking $x=c$ as the boundary surface. 
For general $v$, multiplying by a smooth cut-off functions,
$\chi_2(x),1-\chi_2(x)$ with $\chi_2$ supported in $[-c/2,c/2]$ and combining previous two estimates
 proves the lemma.
\end{proof}
Then we take $g=L\circ I_\varrho f$, combining Lemma \ref{lemma: trace} (notice that $s$ in Lemma \ref{lemma: trace} is $s+1>0$ in the main Theorem) with the boundedness of $L$ and (\ref{main_est_2}) gives us an estimate
\begin{align*}
 ||Rf||_{e^{\frac{F}{x}}H^{s}_{sc}([0,c])} \leq C_1 ||I_\varrho f||_{H^{s+\frac{3}{2}}(PSX|_{\bar{M}_c})},
\end{align*}
where we require $f$ to have supported in $K_c$, and used the fact $\R_\lambda \times \{\pm1\}_\omega$ parametrizes $\mathbb{S}^1$ apart from two poles, and this completes the proof.

\bibliographystyle{plain}
\bibliography{bib_inverse}

\end{document}